\documentclass[preprint,12pt]{elsarticle}    

\setlength{\topmargin}{-.5in}
\setlength{\textheight}{9in}
\setlength{\oddsidemargin}{.125in}
\setlength{\textwidth}{6.45in}

\usepackage{graphicx,epsfig}
\usepackage{enumerate}
\usepackage{amssymb}
\usepackage{amsthm}
\usepackage{amsmath}
\usepackage{centernot}
\usepackage{xcolor}
\usepackage{todonotes}
\usepackage{times}
\usepackage{mathrsfs}
\usepackage[utf8]{inputenc}
\usepackage{subfigure}
\usepackage{multirow}

\usepackage{mathtools}
\usepackage{appendix}

\usepackage{amsmath}
\usepackage{wrapfig}
\usepackage{setspace}
\usepackage{booktabs}
\usepackage{lscape}
\usepackage{algorithm,algorithmicx}
\usepackage{algcompatible}
\usepackage{bm}
\usepackage{graphicx,epsfig}
\usepackage{xfrac}
\usepackage{url}
\usepackage{multirow}
\usepackage{longtable}
\usepackage[linkcolor=blue, urlcolor=blue, citecolor=blue,
colorlinks]{hyperref}

\newtheorem{thm}{Theorem}[section]
\theoremstyle{definition}
\newtheorem{dfn}{Definition}[section]
\newtheorem{lem}{Lemma}[section]
\newtheorem{example}{Example}[section]
\newtheorem{rmrk}{Remark}[]

\makeatletter
\newcommand{\thickhline}{%
	\noalign {\ifnum 0=`}\fi \hrule height 1pt
	\futurelet \reserved@a \@xhline
}

\journal{Information Sciences}
\begin{document}
\begin{frontmatter}
\title{\textbf{Generalized Hukuhara-Clarke Derivative of Interval-valued Functions and its Properties}}
		\author[iitbhu_math]{Ram Surat Chauhan}
		\ead{rschauhan.rs.mat16@itbhu.ac.in}
		\author[iitbhu_math]{Debdas Ghosh\corref{cor1}}
		\ead{debdas.mat@iitbhu.ac.in}
		\author[address_ramik]{Jaroslav Ram{\'i}k}
		\ead{ramik@opf.slu.cz}
		\author[iitbhu_math]{Amit Kumar Debnath}
		\ead{amitkdebnath.rs.mat18@itbhu.ac.in}
		\address[iitbhu_math]{Department of Mathematical Sciences,  Indian Institute of Technology (BHU) Varanasi \\ Uttar Pradesh--221005, India}
		\address[address_ramik]{School of Business Administration in Karvina, Silesian
University Opava, Czechia}
		\cortext[cor1]{Corresponding author}
		\begin{abstract}
			 In this article, the notion of \emph{$gH$-Clarke derivative} for interval-valued functions is proposed. To define the concept of $gH$-Clarke derivatives, the concepts of limit superior, limit inferior, and sublinear interval-valued functions are studied in the sequel. The upper $gH$-Clarke derivative of a $gH$-Lipschitz  \emph{interval-valued function} (IVF) is observed to be a  sublinear IVF. It is found that every $gH$-Lipschitz continuous function is upper $gH$-Clarke differentiable. 
			For a convex and $gH$-Lipschitz IVF, it is shown that the upper $gH$-Clarke derivative coincides with the $gH$-directional derivative. The entire study is supported by suitable illustrative examples.\\
		\end{abstract}

		\begin{keyword}
			 Interval-valued functions \sep  Upper $gH$-Clarke derivative \sep Sublinear IVF\sep $gH$-Lipschitz function. 
			\\ \vspace{0.7cm}
			AMS Mathematics Subject Classification (2010): 90C30 $\cdot$ 65K05
		\end{keyword}

\end{frontmatter}

\section{Introduction}

	Clarke derivative \cite{dutta2005generalized}  is applied in the nonsmooth analysis where the functions do not have a unique linear approximation. Advances of nonsmooth analysis \cite{clarke1990optimization,schirotzek2007nosmooth} show the essential need of this derivative to handle nondifferentiable functions, especially in the absence of convexity. The topics of optimization \cite{Jahn2007}, control theory \cite{Jahn2007}, variational method \cite{Ansari2013}, etc.\ are wide application areas of Clarke derivative. \\

    As the topic of this study is Clarke derivatives for IVFs, in the following subsection, we describe the literature on Interval Optimization Problems (IOPs) and calculus of Interval-Valued Functions (IVFs). The analysis of IVFs enables one to effectively deal with the errors/uncertainties that appear while modeling practical problems. It is to be noted that there is a relatively large joint intersection of the literature survey of this paper with the recent paper by Ghosh et al. \cite{Ghosh2019derivative}. However, the proposed work in this paper is completely different than that in \cite{Ghosh2019derivative}. In  \cite{Ghosh2019derivative}, the properties of IVFs that are differentiable in the sense of $gH$-directional, $gH$-G\^{a}teaux, and   $gH$-Fr\'{e}chet derivatives have been studied. On the other hand, in this paper, we attempt to study the properties of the IVFs that are upper $gH$-Clarke differentiable.  \\

	\subsection{Literature Survey}

    In the literature of interval analysis, initially, Moore \cite{Moore1966}  developed interval arithmetic to deal with compact intervals and IVFs. In  Moore's interval arithmetic, there are a few limitations (see \cite{Ghosh2019derivative} for details), such as, the additive inverse of a nondegenerate interval, i.e., an interval whose upper and lower limits are unequal, does not exist. For the same reason, many conventional properties for real numbers are not true for compact intervals, for instance, for two compact intervals $\textbf{A}$ and $\textbf{B}$, $\textbf{A}\oplus (\textbf{B}\oplus (-\textbf{A})) \neq \textbf{B}~(\text{see Remark \ref{rmk}})$. Thus, to develop a theoretical framework of the calculus of IVFs and interval analysis, a new rule for the difference of compact intervals is introduced by Hukuhara \cite{Hukuhara1967}, known as Hukuhara difference ($H$-difference) of intervals. Although $H$-difference provides the additive inverse of compact intervals, this difference of a compact interval $\textbf{B}$ from a compact interval $\textbf{A}$ can be calculated only when the width of $\textbf{A}$ is greater than or equal to that of $\textbf{B}$ \cite{Chalco2013-2}. 
    To overcome this difficulty, a nonstandard difference of intervals is introduced is \cite{Markov1979} which is named as generalized Hukuhara difference ($gH$-difference) of intervals by Stefanini \cite{Stefanini2008}. The $gH$-difference provides an additive inverse of any compact interval and is applicable for all pairs of compact intervals. Apart from Moore's interval arithmetic, another concept of interval arithmetic has been developed by  Piegat and Landowski \cite{Landowski2015}, namely RDM interval arithmetic, which also ensures the existence of an additive inverse for any compact interval. Generally, all the properties of RDM interval arithmetic are similar to Moore's interval arithmetic except the subtraction of an interval from itself. In this article, we use Moore's interval arithmetic with $gH$-difference instead of RDM interval arithmetic (see Note 2 in  \cite{Ghosh2019derivative} for the reason).\\
	
In the study of interval analysis, in addition to interval arithmetic, an appropriate ordering of intervals and the calculus of IVFs play key roles. Unlike the real numbers, intervals are not linearly ordered. Thus, the development of optimization theory with interval-valued function is not a trivial extension of the conventional optimization theory. Most often \cite{Chalco2013-2, Ghosh2019derivative, Stefanini2009, Wu2008}, IOPs have been analyzed with respect to a partial ordering \cite{Ishibuchi1990}. Some researchers \cite{Bhurjee2016, Ghosh2017spc} used ordering relations of intervals based on the parametric comparison of intervals. In \cite{Costa2015}, an ordering relation of intervals is defined by a bijective map from the set of intervals to the Euclidean plane $\mathbb{R}^2$. However, these ordering relations \cite{Bhurjee2016, Ghosh2017spc, Costa2015} of intervals can be derived from the relations described in \cite{Ishibuchi1990}. Sengupta et al. \cite{Sengupta2001} proposed an acceptability function for intervals, just like a fuzzy membership function. Recently, Ghosh et al. \cite{ghosh2020ordering} investigated variable ordering relations for intervals and used them in IOPs. \\

To observe the properties of an IVF, calculus plays an essential role. Initially, in order to develop the calculus of IVFs, Hukuhara  \cite{Hukuhara1967} introduced the concept of differentiability of IVFs with the help of Hukuhara difference of intervals. However, the definition of Hukuhara differentiability is restrictive \cite{Chalco2013-2}. In general, if $\textbf{F}(x) = \textbf{A} \odot f(x)$, where $\textbf{A}$ is a compact interval and $f(x)$ is a real-valued function, then $\textbf{F}$ is not Hukuhara differentiable in the case of $f'(x)<0$ \cite{Bede2005}. In order to refine the calculus of IVFs, the concepts of $gH$-derivative, $gH$-partial derivative, $gH$-gradient, and $gH$-differentiability for IVFs have been developed in \cite{Chalco2013-2,Ghosh2016newton, Markov1979,  Stefanini2009, Stefanini2019}. Recently, Ghosh et al.\ \cite{Ghosh2019derivative} have provided the idea of $gH$-directional derivative, $gH$-G\^{a}teaux derivative, and $gH$-Fr\'echet derivative of IVFs.\\

\subsection{Motivation and Contribution}
	
 From the literature on the analysis of IVFs, one can notice that the study of traditional generalized derivative (Clarke derivative) for IVFs have not been developed so far. However,  the basic properties of generalized derivatives might be beneficial for characterizing and capturing the optimal solutions of IOPs with nonsmooth IVFs. To define and find properties of Clarke derivative of IVFs, we need to establish the notions of limit superior and sublinearity for IVFs. In this article, after illustrating the concept of limit superior, limit inferior, and sublinearity of IVFs, we define upper and lower $gH$-Clarke derivatives of IVFs. Although both of the upper and lower $gH$-Clarke derivatives of IVFs are defined in this article, only the properties of the upper $gH$-Clarke derivative are studied since the results for the lower derivative can be used analogously. It is shown that if an IVF is upper $gH$-Clarke differentiable at a point, then its derivative is a sublinear IVF. We further prove that the upper $gH$-Clarke derivative exists at any point if the IVF is convex $gH$-Lipschitz and the derivative is equal to $gH$-directional derivative. \\

	\subsection{Delineation}
	
    The rest of the article is demonstrated in the following sequence. The next section covers some basic terminology and notions of convex analysis and interval analysis, followed by the convexity and calculus of IVFs that are required in this paper. Also, a few properties of intervals, the $gH$-directional of an IVF is discussed in Section \ref{section1}. The concepts of limit superior, sublinear IVF and their properties are given in Section \ref{section2}. In the same section, we define upper $gH$-Clarke derivative, lower $gH$-Clarke derivative of IVFs, and prove that the upper $gH$-Clarke derivative of $gH$-Lipschitz IVF always exists.
    Also, for convex $gH$-Lipschitz continuous IVF, it is shown that upper $gH$-Clarke derivative coincides with $gH$-directional derivative. Further, the sublinearity of the upper $gH$-Clarke derivative is shown in the same section.

\section{\textbf{Preliminaries and Terminology}}\label{section1}

This section is devoted to some basic notions on intervals and the convexity and calculus of IVFs. Throughout the paper, we use the following notations.
	
	\begin{itemize}
	\item $\mathcal{X}$ denotes a  real normed linear space with the norm $\|\cdot\|$
	
	\item $\mathcal{B}(\bar{x}, \delta)$ denotes the open ball centered at $\bar{x} \in \mathcal{X}$ with radius $\delta$
	
	\item $\overline{\mathcal{B}}(\bar{x}, \delta)$ denotes $\mathcal{B}(\bar{x}, \delta) \setminus\{\bar{x}\}$
	
	
	
	
	
	\item $\mathbb{R}$ denotes the set of real numbers
    \item $\mathbb{R}_{+}$ denotes the set of nonnegative real numbers
    
	\end{itemize}
	
	\subsection{Arithmetic of Intervals and their Dominance Relation}\label{ssai}

	In this section, we discuss Moore's interval arithmetic \cite{Moore1966, Moore1987} followed by the concepts of $gH$-difference of two intervals and ordering of intervals \cite{Ishibuchi1990}. \\

	 Throughout the article, we denote the set of closed and bounded intervals by $I(\mathbb{R})$ and the elements of $I(\mathbb{R})$ by bold capital letters: ${\textbf A}, {\textbf B}, {\textbf C}, \ldots $. We represent an element $\textbf{A}$ of $I(\mathbb{R})$ in its interval form with the help of the corresponding small letter in the following way: 
	\[
	\textbf{A} = [\underline{a}, \overline{a}],~\text{where}~\underline{a}~\text{and}~\overline{a}~\text{are real numbers such that}~\underline{a} \leq \overline{a}.
	\]
	It is noteworthy that any singleton $\{p\}$ of $\mathbb{R}$ or, a real number can be represented by an interval $\textbf{P}=[\underline{p},\;\overline{p}]$, where $\underline{p}=p=\overline{p}$. In particular,
	\[
	\textbf{0}=\{0\}=[0, 0]~\text{and}~\textbf{1}=\{1\}=[1, 1].
	\]
	Consider two intervals ${\textbf A} = [\underline{a}, \overline{a}]$ and $\textbf{B} = \left[\underline{b}, \overline{b}\right]$. The \emph{addition} of $\textbf{A}$ and $\textbf{B}$, denoted by $\textbf{A} \oplus \textbf{B},$ is defined by
	\[
	\textbf{A} \oplus \textbf{B} = \left[~\underline{a} + \underline{b},~ \overline{a} +
	\overline{b}~\right].
	\]
	\noindent The \emph{subtraction} of $\textbf{B}$ from $\textbf{A}$, denoted by
	$\textbf{A} \ominus \textbf{B}$, is defined by
	\[
	\textbf{A} \ominus \textbf{B} = \left[~\underline{a} - \overline{b},~ \overline{a} -
	\underline{b}~\right].
	\]
	The \emph{multiplication} of $\textbf{A}$ and $\textbf{B}$, denoted by $\textbf{A} \odot \textbf{B}$, is defined by
	\[
	\textbf{A} \odot \textbf{B}  = \left[\min\left\{\underline{a}\; \underline{b},\; \underline{a}\overline{b},\; \overline{a}\underline{b},\; \overline{a}\overline{b}\right\},\; \max\left\{\underline{a}\; \underline{b},\; \underline{a}\overline{b},\; \overline{a}\underline{b},\; \overline{a}\overline{b}\right\} \right].
	\]
	The multiplication of $\textbf{A}$ by a real constant $\lambda$, denoted  by $\lambda \odot  \textbf{A}$ or $\textbf{A} \odot \lambda$, is defined by
	\[
	\lambda \odot  \textbf{A} =
	\begin{cases}
	[\lambda \underline{a},~\lambda \overline{a}] & \text{if $\lambda \geq 0$}\\
	[\lambda \overline{a},~\lambda \underline{a}] & \text{if $\lambda < 0.$}
	\end{cases}
	\]
	Notice that the definition of $\lambda \odot  \textbf{A}$ follows from the fact $\lambda = [\lambda, \lambda]$ and the definition of multiplication $\textbf{A} \odot \textbf{B}$.\\ \\
	Let $0 \not\in \textbf{B}$. The \emph{division} of $\textbf{A}$ by $\textbf{B}$, denoted by $\textbf{A} \oslash \textbf{B}$, is defined by
	\[
	\textbf{A} \oslash \textbf{B} = \left[\min\left\{\underline{a}/\underline{b},\; \underline{a}/\overline{b},\; \overline{a}/\underline{b},\; \overline{a}/\overline{b}\right\},\; \max\left\{\underline{a}/\underline{b},\; \underline{a}/\overline{b},\; \overline{a}/\underline{b},\; \overline{a}/\overline{b}\right\} \right].
	\]


	Since $\textbf{A}\ominus\textbf{A}\neq\textbf{0}$ for any nondegenerate interval $\textbf{A}$, we use the following concept of difference of intervals in this article.

	\begin{dfn}(\emph{$gH$-difference of intervals} \cite{Stefanini2008}). Let $\textbf{A} = [\underline{a}, \overline{a}]$ and $\textbf{B} = [\underline{b}, \overline{b}]$ be two elements of $I(\mathbb{R})$. The \emph{$gH$-difference}
		between $\textbf{A}$ and $\textbf{B}$, denoted by $\textbf{A} \ominus_{gH} \textbf{B}$, is defined by the interval $\textbf{C}$ such that
		\[
		\textbf{A} =  \textbf{B} \oplus  \textbf{C} ~\text{ or }~ \textbf{B} = \textbf{A}
		\ominus \textbf{C}.
		\]
		It is to be noted that for $\textbf{A} = \left[\underline{a},~\overline{a}\right]$ and $\textbf{B} = \left[\underline{b},~\overline{b}\right]$,
		\[
		\textbf{A} \ominus_{gH} \textbf{B} = \left[\min\{\underline{a}-\underline{b},
		\overline{a} - \overline{b}\},~ \max\{\underline{a}-\underline{b}, \overline{a} -
		\overline{b}\}\right] \text{ and } \textbf{A} \ominus_{gH} \textbf{A} = \textbf{0}.
		\]
	\end{dfn}
	 In the following, we provide a domination relation on intervals that is used throughout the paper. We remark that \emph{domination} in the following definition is  based on a \emph{minimization} type optimization problems: the \emph{smaller value} the \emph{better}.
	 \begin{dfn}\label{interval_dominance}
	 	(\emph{Dominance of intervals} \cite{Wu2007}).  Let $\textbf{A} = [\underline{a}, \overline{a}]$ and $\textbf{B} = [\underline{b}, \overline{b}]$
	 	be two intervals in $I(\mathbb{R})$.
	 	\begin{enumerate}[(i)]
	 		\item $\textbf{B}$ is said to be \emph{dominated by} $\textbf{A}$ if $\underline{a}~\leq~ \underline{b}$ and $\overline{a}~\leq~\overline{b}$, and then we write $\textbf{A}~\preceq~ \textbf{B}$;
	 		\item $\textbf{B}$ is said to be \emph{strictly dominated by} $\textbf{A}$ if either `$\underline{a} \leq \underline{b}$  and $\overline{a} < \overline{b}$' or `$\underline{a} < \underline{b}$  and $\overline{a} \leq \overline{b}$', and then we write $\textbf{A}\prec \textbf{B}$;
	 		\item if $\textbf{B}$ is not dominated by $\textbf{A}$, then we write $\textbf{A}\npreceq \textbf{B}$; if $\textbf{B}$ is not strictly dominated by $\textbf{A}$, then we write $\textbf{A}\nprec \textbf{B}$;
	 			\item if either $\textbf{A}$ is dominated by $\textbf{B}$ or $\textbf{B}$ is dominated by $\textbf{A}$, then it will be said that $\textbf{A}$ and $\textbf{B}$ are \emph{comparable},
	 		\item if $\textbf{A}\npreceq \textbf{B}$ and $\textbf{B}~\npreceq~ \textbf{A}$, then it will be said that \emph{none of $\textbf{A}$ and $\textbf{B}$ dominates the other}, or $\textbf{A}$ and $\textbf{B}$ are \emph{not comparable}.
	 	
	 	\end{enumerate}
	 \end{dfn}
	Notice that if $\textbf{B}$ is strictly dominated by $\textbf{A}$, then $\textbf{B}$ is dominated by $\textbf{A}$. Moreover, if $\textbf{B}$ is not dominated by $\textbf{A}$, then $\textbf{B}$ is not strictly dominated by $\textbf{A}$. 
	
		

	\subsection{Few Properties of Intervals}
	In this subsection, a few properties of the elements of $I(\mathbb{R})$ is studied that are used later in the paper. In the rest of the paper, by the norm of an interval, we refer to the following definition.
	\begin{dfn}\label{irnorm}
		(\emph{Norm on $I(\mathbb{R})$} \cite{Moore1966}). Let $\textbf{A}= [\underline{a}, \overline{a}]$. A function ${\lVert \cdot \rVert}_{I(\mathbb{R})} : I(\mathbb{R}) \rightarrow \mathbb{R}_{+}$, defined by
		\[
		{\lVert \textbf{A} \rVert}_{I(\mathbb{R})} = {\left\lVert \left[\underline{a}, \bar{a}\right] \right\rVert}_{I(\mathbb{R})} = \max \{|\underline{a}|, |\bar{a}|\},
		\]
		is a \emph{norm} $I(\mathbb{R})$.
	\end{dfn}
	
	
	

\begin{lem}\label{last}
	For all $x,~y \in \mathbb{R}$ and $\textbf{C} \in I(\mathbb{R})$,
	\begin{enumerate}[(i)]
	    \item \label{c1} if $\textbf{C}\succeq \textbf{0}$, then $\lvert x+y \rvert \odot \textbf{C} \preceq \lvert x \rvert \odot \textbf{C} \oplus  \lvert y \rvert \odot \textbf{C} $,
	
	    \item \label{c2}if $\textbf{C}\preceq \textbf{0}$, then $\lvert x+y \rvert \odot \textbf{C} \succeq \lvert x \rvert \odot \textbf{C} \oplus  \lvert y \rvert \odot \textbf{C},~\text{and} $
	
	\end{enumerate}
	\end{lem}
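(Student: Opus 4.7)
The plan is to unpack the definitions and reduce the interval inequalities in (i) and (ii) to two pairs of real-number inequalities, which then follow from the ordinary triangle inequality $|x+y|\le |x|+|y|$ together with an appropriate sign consideration for the endpoints of $\textbf{C}$.

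First I would write $\textbf{C}=[\underline{c},\overline{c}]$. Since $|x|, |y|, |x+y|$ are all nonnegative, the scalar multiplication rule gives
\[
|x|\odot\textbf{C}=[\,|x|\underline{c},\,|x|\overline{c}\,],\quad |y|\odot\textbf{C}=[\,|y|\underline{c},\,|y|\overline{c}\,],\quad |x+y|\odot\textbf{C}=[\,|x+y|\underline{c},\,|x+y|\overline{c}\,],
\]
and then, by the definition of $\oplus$,
\[
|x|\odot\textbf{C}\,\oplus\,|y|\odot\textbf{C}=\bigl[(|x|+|y|)\underline{c},\,(|x|+|y|)\overline{c}\bigr].
\]
Thus both desired statements reduce to comparing the endpoints $|x+y|\underline{c}$ vs.\ $(|x|+|y|)\underline{c}$ and $|x+y|\overline{c}$ vs.\ $(|x|+|y|)\overline{c}$, under the dominance relation of Definition \ref{interval_dominance}.

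For part (i), the hypothesis $\textbf{C}\succeq\textbf{0}$ means $\underline{c}\ge 0$ and $\overline{c}\ge 0$. The triangle inequality $|x+y|\le |x|+|y|$ and the nonnegativity of $\underline{c}$ and $\overline{c}$ then yield $|x+y|\underline{c}\le (|x|+|y|)\underline{c}$ and $|x+y|\overline{c}\le (|x|+|y|)\overline{c}$, which is exactly $|x+y|\odot\textbf{C}\preceq |x|\odot\textbf{C}\oplus|y|\odot\textbf{C}$. For part (ii), the hypothesis $\textbf{C}\preceq\textbf{0}$ forces $\underline{c}\le 0$ and $\overline{c}\le 0$, so multiplying $|x+y|\le |x|+|y|$ by these nonpositive numbers reverses the inequalities, giving $(|x|+|y|)\underline{c}\le |x+y|\underline{c}$ and $(|x|+|y|)\overline{c}\le |x+y|\overline{c}$; this is the dominance $|x|\odot\textbf{C}\oplus |y|\odot\textbf{C}\preceq |x+y|\odot\textbf{C}$, i.e., the reverse dominance required.

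There is no real obstacle here; the only thing to be careful about is interpreting $\succeq$ and $\preceq$ consistently with Definition \ref{interval_dominance} (note it is a \emph{min}-type order, so $\textbf{C}\succeq\textbf{0}$ genuinely means $\underline{c},\overline{c}\ge 0$) and handling the sign of $\underline{c},\overline{c}$ correctly when multiplying the triangle inequality, since in case (ii) both inequalities flip.
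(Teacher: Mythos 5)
Your proposal is correct and follows essentially the same route as the paper's own proof in Appendix A: reduce both dominance relations to endpoint comparisons $|x+y|\underline{c}$ vs.\ $(|x|+|y|)\underline{c}$ and $|x+y|\overline{c}$ vs.\ $(|x|+|y|)\overline{c}$, then apply the triangle inequality together with the sign of $\underline{c}$ and $\overline{c}$. Your version is slightly more explicit in writing out the scalar multiples and the $\oplus$ sum as intervals, but the argument is the same.
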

	
	\begin{proof}
	 	See  \ref{appendix_co}.
	\end{proof}

		\begin{lem}\label{forfrechet}
		For all $\textbf{A},~ \textbf{B}$,  $\textbf{C}$, $\textbf{D}\in I(\mathbb{R})$,
		\begin{enumerate}[(i)]
			\item \label{aaaassaa} $(\textbf{A}\ominus_{gH}\textbf{C})\oplus(\textbf{C}\ominus_{gH}\textbf{B}) \nprec \textbf{A}\ominus_{gH}\textbf{B}$,
		    \item \label{part21} $\textbf{B} \preceq \textbf{A}\oplus [L, L], ~\text{where}~ L=\lVert \textbf{B}\ominus_{gH}\textbf{A} \rVert_{I(\mathbb{R})},~\text{and}$
			
			\item \label{part22} ${\lVert(\textbf{A}\ominus_{gH}\textbf{B})\ominus_{gH} (\textbf{C}\ominus_{gH}\textbf{D}) \rVert}_{I(\mathbb{R})}
			\leq
			\lVert \textbf{A}\ominus_{gH}\textbf{C} \rVert_{I(\mathbb{R})} \oplus \lVert \textbf{B}\ominus_{gH}\textbf{D} \rVert_{I(\mathbb{R})}.$
		\end{enumerate}
	\end{lem}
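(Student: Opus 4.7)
My plan is to reduce each of the three claims to endpoint-level inequalities via the formula
\[
\textbf{X}\ominus_{gH}\textbf{Y} = \left[\min\{\underline{x}-\underline{y},\overline{x}-\overline{y}\},\ \max\{\underline{x}-\underline{y},\overline{x}-\overline{y}\}\right]
\]
together with $\lVert \textbf{X}\rVert_{I(\mathbb{R})}=\max\{|\underline{x}|,|\overline{x}|\}$. The recurring case split will be on whether $\overline{a}-\overline{b}\ge\underline{a}-\underline{b}$ (equivalently $\operatorname{width}(\textbf{A})\ge\operatorname{width}(\textbf{B})$) or the reverse, and analogously for the other pairs of intervals. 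Part (ii) will be a direct expansion, part (i) will rely on a set-containment observation, and part (iii) will rely on a sandwiching argument in the ``cross'' case.

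Part (ii) follows immediately: the norm formula applied to $\textbf{B}\ominus_{gH}\textbf{A}$ gives $L=\max\{|\underline{b}-\underline{a}|,|\overline{b}-\overline{a}|\}$, hence $L\ge \underline{b}-\underline{a}$ and $L\ge \overline{b}-\overline{a}$, i.e., $\underline{b}\le\underline{a}+L$ and $\overline{b}\le\overline{a}+L$. By Definition~\ref{interval_dominance} this is exactly $\textbf{B}\preceq \textbf{A}\oplus[L,L]$.

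For (i), set $[\alpha_1,\alpha_2]=\textbf{A}\ominus_{gH}\textbf{C}$, $[\beta_1,\beta_2]=\textbf{C}\ominus_{gH}\textbf{B}$, $\delta_1=\underline{a}-\underline{b}$, $\delta_2=\overline{a}-\overline{b}$. The distributive identity $\max\{x,y\}+\max\{u,v\}=\max\{x+u,x+v,y+u,y+v\}$ (and its dual for $\min$), together with the fact that $\delta_1$ and $\delta_2$ are among the four pairwise sums produced, gives
\[
\alpha_1+\beta_1\le \min\{\delta_1,\delta_2\}\le \max\{\delta_1,\delta_2\}\le \alpha_2+\beta_2.
\]
Hence, as a set, $\textbf{A}\ominus_{gH}\textbf{B}\subseteq (\textbf{A}\ominus_{gH}\textbf{C})\oplus(\textbf{C}\ominus_{gH}\textbf{B})$. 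For the sum to strictly dominate $\textbf{A}\ominus_{gH}\textbf{B}$ in the sense of Definition~\ref{interval_dominance} I would need $\alpha_2+\beta_2\le\max\{\delta_1,\delta_2\}$, which with the display forces equality; a short comparison of widths then shows $\operatorname{width}(\textbf{C})$ must lie between $\operatorname{width}(\textbf{A})$ and $\operatorname{width}(\textbf{B})$, and that in turn pins down $\alpha_1+\beta_1=\min\{\delta_1,\delta_2\}$. Thus the two intervals coincide and the dominance cannot be strict.

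For (iii), write $\textbf{X}=\textbf{A}\ominus_{gH}\textbf{B}$ and $\textbf{Y}=\textbf{C}\ominus_{gH}\textbf{D}$, so that the norm on the left equals $\max\{|\underline{x}-\underline{y}|,|\overline{x}-\overline{y}|\}$. In the ``aligned'' case (say $\operatorname{width}(\textbf{A})\ge \operatorname{width}(\textbf{B})$ and $\operatorname{width}(\textbf{C})\ge \operatorname{width}(\textbf{D})$) the endpoint differences reduce to $(\underline{a}-\underline{c})-(\underline{b}-\underline{d})$ and $(\overline{a}-\overline{c})-(\overline{b}-\overline{d})$, whose absolute values are bounded by $|\underline{a}-\underline{c}|+|\underline{b}-\underline{d}|$ and $|\overline{a}-\overline{c}|+|\overline{b}-\overline{d}|$ via the ordinary triangle inequality. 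In the ``cross'' case (say $\operatorname{width}(\textbf{A})\ge\operatorname{width}(\textbf{B})$ but $\operatorname{width}(\textbf{D})\ge\operatorname{width}(\textbf{C})$), the inequalities $\underline{a}-\underline{b}\le\overline{a}-\overline{b}$ and $\overline{c}-\overline{d}\le\underline{c}-\underline{d}$ sandwich $\underline{x}-\underline{y}=(\underline{a}-\underline{b})-(\overline{c}-\overline{d})$ between those same two aligned expressions, so the same bound applies, and $\overline{x}-\overline{y}$ is handled symmetrically. Taking the maximum of the endpoint bounds and using $\max\{u+v,s+t\}\le \max\{u,s\}+\max\{v,t\}$ then yields the desired inequality. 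The main obstacle is the case bookkeeping in (i) and (iii); I plan to present one representative case in each and invoke the symmetric roles $\textbf{A}\leftrightarrow\textbf{B}$ and $\textbf{C}\leftrightarrow\textbf{D}$ to cover the rest.
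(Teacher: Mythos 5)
Your proposal is correct, and it is organized rather differently from the paper's proof. The paper proves (ii) by splitting on which endpoint realizes the norm, proves (i) by a four-way case analysis on the relative widths of the pairs $(\textbf{A},\textbf{C})$ and $(\textbf{C},\textbf{B})$ with a contradiction argument in the two ``cross'' cases, and proves (iii) by contradiction through an enumeration of the four possible forms of $(\textbf{A}\ominus_{gH}\textbf{B})\ominus_{gH}(\textbf{C}\ominus_{gH}\textbf{D})$. You instead prove (ii) in one line (correctly: no case split is needed, since $L\ge \underline{b}-\underline{a}$ and $L\ge\overline{b}-\overline{a}$ hold regardless of which term attains the max), prove (i) via the containment $\textbf{A}\ominus_{gH}\textbf{B}\subseteq(\textbf{A}\ominus_{gH}\textbf{C})\oplus(\textbf{C}\ominus_{gH}\textbf{B})$, and prove (iii) directly with a sandwich argument rather than by contradiction; this buys a shorter, less case-heavy write-up and makes the structural reason for (i) visible (the target interval is a subinterval of the sum, so the sum's upper endpoint can never drop below the target's). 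One step you should tighten: in (i), the inference ``$\alpha_2+\beta_2=\max\{\delta_1,\delta_2\}$ forces $\operatorname{width}(\textbf{C})$ to lie between $\operatorname{width}(\textbf{A})$ and $\operatorname{width}(\textbf{B})$'' does not follow as stated (equality at the upper endpoint alone does not control the width of the sum). The clean route is the identity $(\alpha_1+\beta_1)+(\alpha_2+\beta_2)=(\alpha_1+\alpha_2)+(\beta_1+\beta_2)=\delta_1+\delta_2$, which shows immediately that $\alpha_2+\beta_2=\max\{\delta_1,\delta_2\}$ forces $\alpha_1+\beta_1=\min\{\delta_1,\delta_2\}$, contradicting the strictness required by Definition~\ref{interval_dominance}. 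With that repair, all three parts go through; the cross-case sandwich in (iii) and the final step $\max\{u+v,s+t\}\le\max\{u,s\}+\max\{v,t\}$ are both sound.
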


	\begin{proof}
		See  \ref{appendix frechet}.
	\end{proof}

	\begin{rmrk}\label{rmk}
      The following two points are noticeable. 
      \begin{enumerate}[(i)]
			\item For two elements $\textbf{A}$ and \textbf{B} of $I(\mathbb{R})$, if $\textbf{B}=\textbf{A} \oplus (\textbf{B} \ominus_{gH} \textbf{A} )$, then (\ref{part21}) of Lemma \ref{forfrechet} is an obvious property since $\textbf{B} \ominus_{gH} \textbf{A}\preceq [L, L]$. However, $(\textbf{A} \oplus (\textbf{B} \ominus_{gH} \textbf{A} ))$ is not always equal to $\textbf{B}$. For instance, for $\textbf{A} = [4, 10]$ and $\textbf{B} = [-3, 2]$,
\[\textbf{A} \oplus (\textbf{B} \ominus_{gH} \textbf{A} ) = [4, 10] \oplus [-8, -7] = [-4, 3] \neq \textbf{B}.\]
Therefore, (\ref{part21}) of Lemma \ref{forfrechet} is not a trivial property.
	\item For any $\textbf{A}$, \textbf{B} and $\textbf{C}$ in $I(\mathbb{R})$, if
 \begin{equation}\label{eee}
  \textbf{B}\ominus_{gH}\textbf{A} \preceq \textbf{C} \implies \textbf{B}  \preceq  \textbf{A} \oplus \textbf{C},
 \end{equation}
 then replacing $\textbf{C}$ by $[L, L]$, we see that (\ref{part21}) of Lemma \ref{forfrechet} is an obvious property. However, \eqref{eee} is not always true.
		For instance, if  $\textbf{B}=[-3, 2],~ \textbf{A}=[4, 10]$ and $\textbf{C}= [-7.5, -6]$, then \[\textbf{B}\ominus_{gH}\textbf{A} = [-8, -7]~\text{and}~\textbf{A}\oplus \textbf{C} = [-3.5, 4].\] Hence, $\textbf{B}\ominus_{gH}\textbf{A} \preceq \textbf{C}$, but $\textbf{B}$ and $\textbf{A} \oplus \textbf{C}$ are not comparable. Thus, (\ref{part21}) of Lemma \ref{forfrechet} is not an obvious property.
      \end{enumerate}
	\end{rmrk}
	%
	%


	\subsection{Convexity and Calculus of IVFs}

A function $\textbf{F}$ from a nonempty subset $\mathcal{S}$ of $\mathcal{X}$ to $I(\mathbb{R})$ is known as an IVF. For each argument point $x \in \mathcal{S}$, $\textbf{F}$ can be presented by intervals
	\[
	\textbf{F}(x)=\left[\underline{f}(x),\
	\overline{f}(x)\right],
	\]
	where $\underline{f}$ and $\overline{f}$ are real-valued functions on $\mathcal{S}$ such that $\underline{f}(x) \leq \overline{f}(x)$ for all $x \in \mathcal{S}$.\\
	If $\mathcal{S}$ is convex, then the IVF $\textbf{F}$
	is said to be convex \cite{Wu2007} on $\mathcal{S}$ 	if for any $x_1$, $x_2\in\mathcal{S}$,
		\[
		\textbf{F}(\lambda_1 x_1+\lambda_2 x_2)\preceq
		\lambda_1\odot\textbf{F}(x_1)\oplus\lambda_2\odot\textbf{F}(x_2) \text{ for all } \lambda_1,\lambda_2\in[0, 1] \text{ with } \lambda_1+\lambda_2=1.
		\]
		The IVF 
		$\textbf{F}$ is said to be \emph{$gH$-continuous}  \cite{Ghosh2016newton} at an interior point $\bar{x}\in\mathcal{S}$ if
		\[
		\lim_{\lVert d \rVert\rightarrow 0}\left(\textbf{F}(\bar{x}+d)\ominus_{gH}\textbf{F}(\bar{x})\right)=\textbf{0}.
		\]
		 If $\textbf{F}$ is $gH$-continuous at each $x$ in $\mathcal{S}$, then $\textbf{F}$ is said to be $gH$-continuous on $\mathcal{S}$.\\
		 The IVF  $\textbf{F}$ is said to be \emph{$gH$-Lipschitz continuous} \cite{Ghosh2019derivative} at $\bar{x}\in \mathcal{S}$ if there exist constants $K'>0 $ and $\delta > 0$ such that
		\[ {\lVert \textbf{F}(\bar{x}) \ominus_{gH} \textbf{F}(y) \rVert }_{I(\mathbb{R})} \le K' {\lVert \bar{x}-y \rVert} ~\text{for all}~y \in \mathcal{S} \cap \mathcal{B}(\bar{x}, \delta). \]
		The constant $K'$ is called a Lipschitz constant of $\textbf{F}$ at $\bar{x}$. If there exists a $K>0 $ such that
		\[ {\lVert \textbf{F}(x) \ominus_{gH} \textbf{F}(y) \rVert }_{I(\mathbb{R})} \le K {\lVert x-y \rVert} ~\text{for all}~x,y \in \mathcal{S}, \]
	then the IVF  $\textbf{F}$ is said to be $gH$-Lipschitz continuous on $\mathcal{S}$ and the constant $K$ is said to be a Lipschitz constant of $\textbf{F}$ on $\mathcal{S}$.
	\begin{lem}(See \label{lc1}\cite{Wu2007}).
		$\textbf{F}$ is a convex IVF on a convex set $\mathcal{S} \subseteq \mathcal{X}$ if and only if $\underline{f}$
		and $\overline{f}$ are convex on $\mathcal{S}$.
	\end{lem}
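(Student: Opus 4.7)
The plan is to unfold the definitions on both sides of the equivalence and observe that the interval-valued convexity inequality decouples into two scalar convexity inequalities, one for each endpoint function. The key algebraic fact we need is the explicit endpoint description of a nonnegative scalar multiple and of an addition of intervals, both supplied earlier in Section \ref{ssai}, together with the endpoint characterisation of the order $\preceq$ given in Definition \ref{interval_dominance}.

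Concretely, I would fix $x_1, x_2 \in \mathcal{S}$ and $\lambda_1,\lambda_2 \in [0,1]$ with $\lambda_1+\lambda_2 = 1$. Since $\lambda_1,\lambda_2 \geq 0$, the multiplication rule gives $\lambda_i \odot \textbf{F}(x_i) = [\lambda_i \underline{f}(x_i),\, \lambda_i \overline{f}(x_i)]$, so by the addition rule
\[
\lambda_1\odot\textbf{F}(x_1)\oplus\lambda_2\odot\textbf{F}(x_2)
= \bigl[\lambda_1\underline{f}(x_1)+\lambda_2\underline{f}(x_2),\ \lambda_1\overline{f}(x_1)+\lambda_2\overline{f}(x_2)\bigr].
\]
The left-hand side of the convexity inequality is $\textbf{F}(\lambda_1 x_1+\lambda_2 x_2) = [\underline{f}(\lambda_1 x_1+\lambda_2 x_2),\,\overline{f}(\lambda_1 x_1+\lambda_2 x_2)]$. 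Applying Definition \ref{interval_dominance}(i), the relation $\textbf{F}(\lambda_1 x_1+\lambda_2 x_2) \preceq \lambda_1\odot\textbf{F}(x_1)\oplus\lambda_2\odot\textbf{F}(x_2)$ is equivalent to the conjunction of
\[
\underline{f}(\lambda_1 x_1+\lambda_2 x_2) \leq \lambda_1\underline{f}(x_1)+\lambda_2\underline{f}(x_2)
\quad\text{and}\quad
\overline{f}(\lambda_1 x_1+\lambda_2 x_2) \leq \lambda_1\overline{f}(x_1)+\lambda_2\overline{f}(x_2).
\]

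For the forward implication, assume $\textbf{F}$ is convex on $\mathcal{S}$; then for arbitrary $x_1,x_2\in\mathcal{S}$ and admissible $\lambda_1,\lambda_2$, the displayed pair of scalar inequalities holds, which is precisely the convexity of $\underline{f}$ and of $\overline{f}$ on $\mathcal{S}$. For the reverse implication, assume $\underline{f}$ and $\overline{f}$ are convex; then the same pair of scalar inequalities holds, which by the endpoint characterisation of $\preceq$ recombines into the interval inequality defining convexity of $\textbf{F}$.

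I do not foresee any genuine obstacle here: the whole argument is an unpacking of definitions, and the only care needed is to invoke the $\lambda \geq 0$ branch of the scalar-interval multiplication (so that the endpoints of $\lambda_i \odot \textbf{F}(x_i)$ are in the order $[\lambda_i \underline{f}(x_i), \lambda_i \overline{f}(x_i)]$ rather than reversed). Once that is noted, both directions follow immediately from Definition \ref{interval_dominance} applied componentwise.
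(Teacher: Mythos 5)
Your argument is correct: unfolding the $\lambda\geq 0$ branch of scalar multiplication, the addition rule, and Definition \ref{interval_dominance}(i) shows that the interval convexity inequality is exactly the conjunction of the two scalar convexity inequalities for $\underline{f}$ and $\overline{f}$, which gives both directions at once. The paper itself gives no proof of this lemma (it is quoted from \cite{Wu2007}), but your componentwise unpacking is the standard and essentially the only natural argument for it.
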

\begin{lem}\label{lc2} Let $\textbf{F}$ be an IVF  on a nonempty subset $\mathcal{S}$ of $\mathcal{X}$.
\begin{enumerate}[(i)]
\item \label{lcc} $\textbf{F}$ is $gH$-continuous on $\mathcal{S}$ if and only if
		$\underline{f}$ and $\overline{f}$ are continuous on $\mathcal{S}$.
\item\label{lcrs}
$\textbf{F}$ is $gH$-Lipschitz continuous on $\mathcal{S}$ if and only if
$\underline{f}$ and $\overline{f}$ are Lipschitz continuous on $\mathcal{S}$.
\item\label{lcrs1}
If $\textbf{F}$ is a $gH$-Lipschitz continuous on $\mathcal{S}$, then $\textbf{F}$ is $gH$-continuous on $\mathcal{S}$.
\end{enumerate}		
\end{lem}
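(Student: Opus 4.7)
The plan is to reduce each statement to a pair of statements about the real-valued endpoint functions $\underline{f}$ and $\overline{f}$, using the explicit endpoint formula for $\ominus_{gH}$ given in the definition, together with the definition of $\|\cdot\|_{I(\mathbb{R})}$. Throughout, set
\[
\alpha(x,y)=\underline{f}(x)-\underline{f}(y), \qquad \beta(x,y)=\overline{f}(x)-\overline{f}(y),
\]
so that by definition
\[
\textbf{F}(x)\ominus_{gH}\textbf{F}(y)=\bigl[\min\{\alpha(x,y),\beta(x,y)\},\ \max\{\alpha(x,y),\beta(x,y)\}\bigr].
\]

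For part (\ref{lcc}), I would note that $\textbf{F}(\bar x+d)\ominus_{gH}\textbf{F}(\bar x)\to\textbf{0}$ as $\|d\|\to 0$ if and only if both $\min\{\alpha,\beta\}\to 0$ and $\max\{\alpha,\beta\}\to 0$, which, since $\min\{\alpha,\beta\}\le\alpha,\beta\le\max\{\alpha,\beta\}$, is equivalent to $\alpha(\bar x+d,\bar x)\to 0$ and $\beta(\bar x+d,\bar x)\to 0$, i.e., continuity of $\underline{f}$ and $\overline{f}$ at $\bar x$. Applying this pointwise on $\mathcal{S}$ gives the claim.

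For part (\ref{lcrs}) the main technical point is the identity
\[
\bigl\|\textbf{F}(x)\ominus_{gH}\textbf{F}(y)\bigr\|_{I(\mathbb{R})}=\max\bigl\{|\alpha(x,y)|,\,|\beta(x,y)|\bigr\},
\]
which I would verify by a short case analysis on the signs of $\alpha$ and $\beta$ (using that for any $a,b\in\mathbb{R}$, $\max\{|\min\{a,b\}|,|\max\{a,b\}|\}=\max\{|a|,|b|\}$). Once this identity is in hand, the Lipschitz inequality $\|\textbf{F}(x)\ominus_{gH}\textbf{F}(y)\|_{I(\mathbb{R})}\le K\|x-y\|$ becomes
\[
\max\bigl\{|\underline{f}(x)-\underline{f}(y)|,\,|\overline{f}(x)-\overline{f}(y)|\bigr\}\le K\|x-y\|,
\]
which is equivalent to $\underline{f}$ and $\overline{f}$ each being Lipschitz with constant $\le K$; conversely, if both are Lipschitz with constants $K_1$ and $K_2$, taking $K=\max\{K_1,K_2\}$ gives the $gH$-Lipschitz bound on $\textbf{F}$. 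This is the step I expect to require the most care, as it relies on the sign-case verification of the norm identity rather than on anything already proved in the excerpt.

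Part (\ref{lcrs1}) then follows immediately: by (\ref{lcrs}), $gH$-Lipschitz continuity of $\textbf{F}$ yields Lipschitz continuity (hence continuity) of $\underline{f}$ and $\overline{f}$, and by (\ref{lcc}) this is equivalent to $gH$-continuity of $\textbf{F}$ on $\mathcal{S}$. Alternatively, one may argue directly from the definition by taking $d=y-\bar x$ and observing that $\|\textbf{F}(\bar x+d)\ominus_{gH}\textbf{F}(\bar x)\|_{I(\mathbb{R})}\le K\|d\|\to 0$, but routing through (\ref{lcc}) and (\ref{lcrs}) keeps the proof uniform.
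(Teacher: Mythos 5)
Your proposal is correct and follows essentially the same route as the paper's proof: both reduce each statement to the endpoint functions $\underline{f}$ and $\overline{f}$ via the formula for $\ominus_{gH}$ and the identity $\|\textbf{F}(x)\ominus_{gH}\textbf{F}(y)\|_{I(\mathbb{R})}=\max\{|\underline{f}(x)-\underline{f}(y)|,|\overline{f}(x)-\overline{f}(y)|\}$ (which the paper uses implicitly and you verify explicitly). The only cosmetic differences are that the paper proves the converse of (i) by contradiction and proves (iii) directly from the norm bound rather than by citing (i) and (ii), a route you also note as an alternative.
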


\begin{proof}
        See \ref{aplc1}.
\end{proof}

A consequence of Lemma \ref{lc2}  is that $gH$-continuity and $gH$-Lipschitz continuity of IVFs can be defined classically, i.e., without the concept of $gH$-difference. 
Then, the prefix gH- in continuity and Lipschitz continuity could be omitted.

\begin{rmrk}
Converse of \eqref{lcrs1} of Lemma \ref{lc2} is not true. For example, consider $\mathcal{X}$ as the Euclidean space $\mathbb{R}$, $\mathcal{S}=[0, 10]$, and the IVF $\textbf{F}:\mathcal{S}\to I(\mathbb{R})$, which is defined by
\[\textbf{F}(x)=\sqrt{x}\odot[2, 5].\]
Since $\underline{f}(x)=2\sqrt{x}$ and $\overline{f}(x)=5\sqrt{x}$ are continuous on $\mathcal{S}$, $\textbf{F}$ is $gH$-continuous on $\mathcal{S}$ by \eqref{lcc} of Lemma \ref{lc2}. If $\textbf{F}$ is $gH$-Lipschitz continuous on $\mathcal{S}$, then by \eqref{lcrs} of Lemma \ref{lc2}, $\underline{f}$ and $\overline{f}$ are Lipschitz continuous on $\mathcal{S}$, which is not true. Consequently, $\textbf{F}$ is not $gH$-Lipschitz continuous on $\mathcal{S}$.
\end{rmrk}

	\begin{dfn}\label{ddd}
		(\emph{$gH$-directional derivative} \cite{Ghosh2019derivative, Stefanini2019}).
		Let $\textbf{F}$ be an IVF on a nonempty subset $\mathcal{S}$ of $\mathcal{X}$. Let $\bar{x} \in \mathcal{S}$ and $h \in \mathcal{X}$. If the limit
		\[
		\lim_{\lambda \to 0+}\frac{1}{\lambda}\odot\big(\textbf{F}(\bar{x}+\lambda h)\ominus_{gH}\textbf{F}(\bar{x})\big)
		\]
		exists finitely, then the limit is said to be \emph{$gH$-directional derivative} of $\textbf{F}$ at $\bar{x}$ in the direction $h$, and it is denoted by $\textbf{F}_\mathscr{D}(\bar{x})(h)$.
	\end{dfn}

	\section{\textbf{$gH$-Clarke Derivative of IVF}} \label{section2}

	In this section, extended concepts of the $gH$-directional derivative, namely upper and lower $gH$-Clarke derivatives, for IVFs are given. A short discussion of the required notions of limit superior and sublinearity for IVFs is provided.

	\begin{dfn}\label{sup}(\emph{Supremum and limit superior of an IVF}). Let $\mathcal{S}$ be a nonempty subset of $\mathcal{X}$ and $\textbf{F}: \mathcal{S} \rightarrow I(\mathbb{R})$ be an IVF. Then, the \emph{supremum} of \textbf{F} over $\mathcal{S}$ is defined by
	\[\sup_{\mathcal{S}} \textbf{F} = \left[\sup_{\mathcal{S}} \underline{f},~ \sup_{\mathcal{S}}\overline{f}\right],\]
	where $\sup\limits_{\mathcal{S}}~ \underline{f} = \sup\left\{\underline{f}(x) : x\in \mathcal{S}\right\}$ and $\sup\limits_{ \mathcal{S}} \overline{f} = \sup\left\{\overline{f}(x) : x\in \mathcal{S}\right\}$.\\
    The \emph{limit superior} of $\textbf{F}$ at a limit point $\bar{x}$ in $\mathcal{S}$ is defined by
	\[\limsup_{x\to \bar{x}} \textbf{F}(x) = \left[\limsup_{x\to \bar{x}} \underline{f}(x),~ \limsup_{x\to \bar{x}} \overline{f}(x)\right],\]
	where $\limsup\limits_{x\to \bar{x}} \underline{f}(x)= \lim\limits_{\delta \to 0}\bigg(\sup\limits_{~x\in \overline{\mathcal{B}}(\bar{x}, \delta) \cap \mathcal{S}}\underline{f}(x)\bigg)$~and~  $\limsup\limits_{x\to \bar{x}} \overline{f}(x)= \lim\limits_{\delta \to 0}\bigg(\sup\limits_{~x\in \overline{\mathcal{B}}(\bar{x}, \delta) \cap \mathcal{S}}\overline{f}(x)\bigg)$.
	\end{dfn}
	\begin{dfn}\label{inf}(\emph{Infimum and limit inferior of an IVF}). Let $\mathcal{S}$ be a nonempty subset of $\mathcal{X}$ and $\textbf{F}: \mathcal{S} \rightarrow I(\mathbb{R})$ be an IVF. Then, the \emph{infimum} of \textbf{F} is defined by 
\[\inf_{\mathcal{S}} \textbf{F} = \left[\inf_{x \in \mathcal{S}} \underline{f},~ \inf_{x \in \mathcal{S}}\overline{f}\right],\]
where $\inf\limits_{\mathcal{S}} \underline{f} = \inf\left\{\underline{f}(x) : x\in \mathcal{S}\right\}$ and $\inf\limits_{\mathcal{S}} \overline{f} = \inf\left\{\overline{f}(x) : x\in \mathcal{S}\right\}$.\\
    The \emph{limit inferior} of $\textbf{F}$ at a limit point $\bar{x}$ in $\mathcal{S}$ is defined by
	\[\liminf_{x\to \bar{x}} \textbf{F}(x) = \left[\liminf_{x\to \bar{x}} \underline{f}(x),~ \liminf_{x\to \bar{x}} \overline{f}(x)\right],\]
	where $\liminf\limits_{x\to \bar{x}} \underline{f}(x)= \lim\limits_{\delta \to 0}\bigg(\inf\limits_{~x\in \overline{\mathcal{B}}(\bar{x}, \delta) \cap \mathcal{S}}\underline{f}(x)\bigg)$ and  $\liminf\limits_{x\to \bar{x}} \overline{f}(x)= \lim\limits_{\delta \to 0}\bigg(\inf\limits_{~x\in \overline{\mathcal{B}}(\bar{x}, \delta) \cap \mathcal{S}}\overline{f}(x)\bigg)$.
		
	\end{dfn}

	\begin{lem}\label{00}
	Let $\mathcal{S}$ be a nonempty subset of $\mathcal{X}$ and $\textbf{F},~\textbf{G}: \mathcal{S} \rightarrow I(\mathbb{R})$ be two IVFs. Then, at any $\bar{x}\in \mathcal{S}$, the following properties are true:
	\begin{enumerate}[(i)]
	    \item \label{01} $\limsup\limits_{x\to \bar{x}}\left(\textbf{F}(x) \oplus \textbf{G}(x)\right) \preceq \limsup\limits_{x\to \bar{x}} \textbf{F}(x) \oplus \limsup\limits_{x\to \bar{x}} \textbf{G}(x)$,
			
		\item \label{02} $\limsup\limits_{x\to \bar{x}}\left(\lambda \odot \textbf{F}(x)\right) =  \lambda \odot \limsup\limits_{x\to \bar{x}} \textbf{F}(x)~\text{for all}~\lambda \geq 0$, and
		
		\item \label{03} $\left\lVert \limsup\limits_{x\to \bar{x}} \textbf{F}(x) \right\rVert_{I(\mathbb{R})} \leq \limsup\limits_{x\to \bar{x}} \left\lVert  \textbf{F}(x) \right\rVert_{I(\mathbb{R})}. $
			
	\end{enumerate}
	\end{lem}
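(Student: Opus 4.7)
The plan is to reduce each of the three assertions to the corresponding classical fact about $\limsup$ of real-valued functions, invoking Definition \ref{sup} to split $\limsup_{x\to\bar{x}}\textbf{F}(x)$ as the interval $[\limsup\underline{f},\,\limsup\overline{f}]$ and then using the endpoint descriptions of $\oplus$, of $\lambda\odot(\cdot)$ for $\lambda\ge 0$, of $\preceq$ from Definition \ref{interval_dominance}, and of $\|\cdot\|_{I(\mathbb{R})}$ from Definition \ref{irnorm}. Thus everything is ultimately done componentwise on $\underline{f}$, $\overline{f}$, $\underline{g}$, and $\overline{g}$.

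For (\ref{01}), I would first use $\textbf{F}(x)\oplus\textbf{G}(x)=[\underline{f}(x)+\underline{g}(x),\,\overline{f}(x)+\overline{g}(x)]$, take $\limsup$ endpoint-wise via Definition \ref{sup}, and apply the classical subadditivity $\limsup(\phi+\psi)\le\limsup\phi+\limsup\psi$ to each of the two real-valued coordinates; in view of the componentwise form of $\preceq$, this is exactly the required interval inequality. For (\ref{02}), since $\lambda\ge 0$ gives $\lambda\odot\textbf{F}(x)=[\lambda\underline{f}(x),\,\lambda\overline{f}(x)]$, the classical identity $\limsup(\lambda\phi)=\lambda\limsup\phi$ for nonnegative $\lambda$, applied to each endpoint and reassembled as an interval, yields the equality; the case $\lambda=0$ reduces trivially to $\textbf{0}=\textbf{0}$.

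Part (\ref{03}) is the step I expect to require the most care. The key auxiliary fact I would establish is the real-variable inequality $|\limsup\phi|\le\limsup|\phi|$, which follows from $\pm\phi(x)\le|\phi(x)|$, monotonicity of $\limsup$, and the sign-reversal identity $\limsup(-\phi)=-\liminf\phi\ge-\limsup\phi$. Applying this with $\phi=\underline{f}$ and with $\phi=\overline{f}$, and chaining with the trivial pointwise bounds $|\underline{f}(x)|,\,|\overline{f}(x)|\le\max\{|\underline{f}(x)|,|\overline{f}(x)|\}=\|\textbf{F}(x)\|_{I(\mathbb{R})}$, shows that $|\limsup\underline{f}|$ and $|\limsup\overline{f}|$ are both bounded above by $\limsup_{x\to\bar{x}}\|\textbf{F}(x)\|_{I(\mathbb{R})}$. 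Taking the maximum of the two left-hand sides and recognizing it as $\|\limsup_{x\to\bar{x}}\textbf{F}(x)\|_{I(\mathbb{R})}$ finishes (\ref{03}). The only caveat I would flag is the tacit assumption that the one-sided limits involved are finite real numbers, so that all endpoint expressions are well-defined.
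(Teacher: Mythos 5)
Your proposal is correct and follows essentially the same route as the paper's proof: both reduce each part to the corresponding classical $\limsup$ fact applied componentwise to $\underline{f}$, $\overline{f}$ (and $\underline{g}$, $\overline{g}$), and both handle (iii) via $\lvert\limsup\phi\rvert\le\limsup\lvert\phi\rvert$ combined with the $\max$ form of the norm. Your justification of that auxiliary inequality is in fact slightly more explicit than the paper's, which simply asserts it.
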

	
	\begin{proof}
		See \ref{appendix_sup}.
	\end{proof}

	\begin{dfn}\label{dffff}
		(\emph{Upper $gH$-Clarke derivative}).
		Let $\textbf{F}$ be an IVF on a nonempty subset $\mathcal{S}$ of $\mathcal{X}$. For   $\bar{x}\in \mathcal{S}$ and $h \in \mathcal{X}$, if the limit superior
		\[
		\limsup_{\substack{%
				x \to \bar{x}\\
				\lambda \to 0+}}\frac{1}{\lambda}\odot\big(\textbf{F}(x+\lambda h)\ominus_{gH}\textbf{F}(x)\big)=\lim\limits_{\delta \to 0}\bigg(\sup\limits_{x\in \overline{\mathcal{B}}(\bar{x}, \delta) \cap \mathcal{S},~\lambda \in (0, \delta)}\frac{1}{\lambda}\odot\big(\textbf{F}(x+\lambda h)\ominus_{gH}\textbf{F}(x)\big) \bigg)
		\]
		 exists finitely, then the limit superior value is called \emph{upper $gH$-Clarke derivative} of $\textbf{F}$ at $\bar{x}$ in the direction $h$, and it is denoted by $\textbf{F}_\mathscr{C}(\bar{x})(h)$. If this limit superior exists for all $h \in \mathcal{X}$, then $\textbf{F}$ is said to be \emph{upper $gH$-Clarke differentiable} at $\bar{x}$.
	\end{dfn}

		\begin{dfn}\label{dfffl}
			(\emph{Lower $gH$-Clarke derivative}).
			Let $\textbf{F}$ be an IVF on a nonempty subset $\mathcal{S}$ of $\mathcal{X}$. For   $\bar{x}\in \mathcal{S}$ and $h \in \mathcal{X}$, if the limit inferior
			\[
			\liminf_{\substack{%
					x \to \bar{x}\\
					\lambda \to 0+}}\frac{1}{\lambda}\odot\big(\textbf{F}(x+\lambda h)\ominus_{gH}\textbf{F}(x)\big) =\lim\limits_{\delta \to 0}\bigg(\inf\limits_{~x\in \overline{\mathcal{B}}(\bar{x}, \delta) \cap \mathcal{S},~\lambda \in (0, \delta)}\frac{1}{\lambda}\odot\big(\textbf{F}(x+\lambda h)\ominus_{gH}\textbf{F}(x)\big) \bigg)
			\]
			exists finitely, then the limit inferior value is called \emph{lower $gH$-Clarke derivative} of $\textbf{F}$ at $\bar{x}$ in the direction $h$. If this limit inferior exists for all $h \in \mathcal{X}$, then $\textbf{F}$ is said to be \emph{lower $gH$-Clarke differentiable} at $\bar{x}$.
		\end{dfn}

If $\textbf{F}$ has both upper and lower $gH$-Clark derivatives at $x$ and they are equal, then $\textbf{F}$ is called $gH$-Clark differentiable at $x$.

\begin{rmrk}
			Conventionally, for real valued-functions, the terminologies Clarke derivative \cite{clarke1990optimization,Jahn2007} and upper Clarke derivative \cite{demyanov2002rise} are interchangeably used. In fact, the upper Clarke derivative is usually referred to as Clarke derivative. However, in order to avoid any confusion, we prefix upper and lower with the Clarke derivative corresponding to the values given by limit superior and limit inferior, respectively. In addition, throughout the article, we use the notation $\textbf{F}_\mathscr{C}$ to refer the upper $gH$-Clarke derivative of an IVF $\textbf{F}$.
		\end{rmrk}

		\begin{rmrk}
		 It is clear that \textbf{F} is lower $gH$-Clark differentiable at $\bar{x}$ if and only if  $(\ominus\textbf{F})$ is upper $gH$-Clark differentiable at $\bar{x}$. That is why we deal only with the upper $gH$-Clark differentiability in this study.
		\end{rmrk}

		\begin{example}
		 In this example, we calculate the upper $gH$-Clarke derivative at $\bar{x}=0$ for the IVF $\textbf{F}(x) = \lvert x \rvert \odot \textbf{C}$, where $\textbf{0} \preceq \textbf{C} \in I(\mathbb{R})$, $\mathcal{X}$ is the Euclidean space $\mathbb{R}$, and $\mathcal{S}=\mathcal{X}$. \\
		For any $h \in \mathcal{X}$, we see that
		\begin{eqnarray}\label{0_1}
			&& \limsup_{\substack{%
					x \to 0\\
					\lambda \to 0+}}\frac{1}{\lambda}\odot\left(\textbf{F}(x+\lambda h)\ominus_{gH}\textbf{F}(x)\right) \nonumber \\
			&\preceq& \limsup_{\substack{%
					x \to 0\\
					\lambda \to 0+}}\frac{1}{\lambda}\odot\left(\lvert x\rvert \odot \textbf{C} \oplus  \lambda \lvert h\rvert\odot \textbf{C}\ominus_{gH} \lvert x\rvert\odot \textbf{C}\right)~\text{by Lemma \ref{last}} \nonumber \\
			&=&\lvert h \rvert \odot \textbf{C}.
		\end{eqnarray}
		Further,
		\begingroup
		\allowdisplaybreaks
		\begin{eqnarray}\label{0_2}
			&& \limsup_{\substack{%
					x \to 0\\
					\lambda \to 0+}}\frac{1}{\lambda}\odot\left(\lvert x+\lambda h\rvert\odot \textbf{C}\ominus_{gH} \lvert x\rvert\odot \textbf{C}\right)
					\nonumber \\
			&\succeq&  \limsup_{\substack{%
					\lambda \to 0+}}\frac{1}{\lambda}\odot\left(2\lambda \lvert h\rvert\odot \textbf{C}\ominus_{gH} \lambda \lvert h\rvert\odot \textbf{C}\right)~(\text{taking $x=\lambda h, h>0$}) \nonumber \\
			&=&\lvert h \rvert \odot \textbf{C}.
		\end{eqnarray}
		\endgroup
		Then, from the inequalities (\ref{0_1}) and (\ref{0_2}), we get 
		$\textbf{F}_\mathscr{C}(\bar{x})(h) = \lvert h \rvert \odot \textbf{C}$.
		\end{example}

		\begin{lem}\label{clarkeimply}
		If $\underline{f}$ and $\overline{f}$ are upper Clarke differentiable at $\bar{x}\in \mathcal{S} \subseteq \mathcal{X}$, then the IVF $\textbf{F}$ is upper $gH$-Clarke differentiable at $\bar{x} \in \mathcal{S}$.
	\end{lem}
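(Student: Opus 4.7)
The plan is to reduce the claim to two real-valued limit superior computations by exploiting the componentwise form of the $gH$-difference. I would begin by writing
\[
\frac{1}{\lambda}\odot\bigl(\textbf{F}(x+\lambda h)\ominus_{gH}\textbf{F}(x)\bigr)=\bigl[\min\{u(x,\lambda),v(x,\lambda)\},\ \max\{u(x,\lambda),v(x,\lambda)\}\bigr],
\]
where $u(x,\lambda)=(\underline{f}(x+\lambda h)-\underline{f}(x))/\lambda$ and $v(x,\lambda)=(\overline{f}(x+\lambda h)-\overline{f}(x))/\lambda$; this follows directly from the explicit form of $\ominus_{gH}$ and the fact that $\lambda>0$ preserves endpoints under $\odot$. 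By Definition \ref{sup}, showing that $\textbf{F}_\mathscr{C}(\bar{x})(h)$ exists finitely amounts to showing that $\limsup\min\{u,v\}$ and $\limsup\max\{u,v\}$, both taken as $x\to\bar{x}$ and $\lambda\to 0+$, are finite real numbers.

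For the upper endpoint I would invoke the elementary identity $\sup_{D}\max\{u,v\}=\max\{\sup_{D}u,\sup_{D}v\}$ applied on $D=\overline{\mathcal{B}}(\bar{x},\delta)\times(0,\delta)$. Passing to the limit $\delta\to 0$ and using that $\max$ commutes with monotone limits yields
\[
\limsup_{\substack{x\to\bar{x}\\ \lambda\to 0+}}\max\{u,v\}=\max\bigl\{\underline{f}^{\circ}(\bar{x})(h),\ \overline{f}^{\circ}(\bar{x})(h)\bigr\},
\]
which is finite by hypothesis. For the lower endpoint, the pointwise bounds $\min\{u,v\}\le u$ and $\min\{u,v\}\le v$ give
\[
\limsup_{\substack{x\to\bar{x}\\ \lambda\to 0+}}\min\{u,v\}\le\min\bigl\{\underline{f}^{\circ}(\bar{x})(h),\ \overline{f}^{\circ}(\bar{x})(h)\bigr\}<\infty,
\]
and the complementary lower bound $\limsup\min\{u,v\}>-\infty$ can be extracted by applying the same hypothesis in the reversed direction $-h$: finiteness of $\underline{f}^{\circ}(\bar{x})(-h)$ and $\overline{f}^{\circ}(\bar{x})(-h)$, after the change of variable $y=x-\lambda h$, gives an upper bound on $-u$ and $-v$ on a neighborhood of $(\bar{x},0^{+})$, hence a lower bound on $u$, $v$, and therefore on $\min\{u,v\}$.

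Assembling the two endpoint evaluations and invoking Definition \ref{sup} again produces a well-defined element of $I(\mathbb{R})$ for every $h\in\mathcal{X}$, which is exactly the conclusion of upper $gH$-Clarke differentiability of $\textbf{F}$ at $\bar{x}$. The main obstacle I expect is the lower-endpoint finiteness: unlike $\max$, the operator $\min$ does not factor through $\limsup$ via a clean identity, so finiteness of the individual one-sided derivatives $\underline{f}^{\circ}(\bar{x})(h)$ and $\overline{f}^{\circ}(\bar{x})(h)$ alone does not suffice. The resolution is the directional symmetry of the hypothesis, since $h\in\mathcal{X}$ is arbitrary the assumption is in force also at $-h$, and the two directions together yield the two-sided local control needed to close the argument.
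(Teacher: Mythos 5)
Your proof is correct, but it follows a different route from the paper's. The paper's proof also reduces to showing that $\limsup\min\{l_1,l_2\}/\lambda$ and $\limsup\max\{l_1,l_2\}/\lambda$ exist, but it gets there through the algebraic identities $\min\{a,b\}=\tfrac12\big(a+b-\lvert a-b\rvert\big)$ and $\max\{a,b\}=\tfrac12\big(a+b+\lvert a-b\rvert\big)$, asserting a chain of implications: existence of the two individual limit superiors gives existence of $\limsup (l_1+l_2)/\lambda$ and $\limsup \lvert l_1-l_2\rvert/\lambda$, hence of the limit superiors of the min and the max. You instead work directly with the endpoint functions $u,v$, use the exact identity $\sup_D\max\{u,v\}=\max\{\sup_D u,\sup_D v\}$ for the upper endpoint, and for the lower endpoint supply a two-sided bound, invoking the hypothesis in the direction $-h$ to get $\liminf u>-\infty$ and $\liminf v>-\infty$ via the substitution that identifies $\underline{f}^{\circ}(\bar{x})(-h)$ with $\limsup(-u)$. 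This is a genuine improvement in rigor at exactly the point you flag: for arbitrary families, finiteness of $\limsup u$ and $\limsup v$ does \emph{not} by itself force $\limsup(u+v)$, $\limsup\lvert u-v\rvert$, or $\limsup\min\{u,v\}$ to be finite (one of $u,v$ can be very negative whenever the other is near its limit superior), so the paper's first implication needs the same additional two-sided control that you extract from the direction $-h$. The only cosmetic caveat is that your argument uses upper Clarke differentiability of $\underline{f}$ and $\overline{f}$ in the direction $-h$ as well as $h$; this is available because the hypothesis, read as in Definition \ref{dffff}, asserts existence of the Clarke derivative in every direction, but it is worth stating explicitly that you are using the hypothesis at $-h$ and not merely at $h$.
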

	\begin{proof}
	        See \ref{appendix_imply}.
    	\end{proof}

	\begin{rmrk}\label{00011}
		Let $\mathcal{S}$ be a nonempty subset of $ \mathcal{X}$ and the IVF $\textbf{F}: \mathcal{S} \rightarrow I(\mathbb{R})$ be upper $gH$-Clarke diffrentiable at $\bar{x} \in \mathcal{S}$. Then, $\textbf{F}$ may not be $gH$-directional differentiable at $\bar{x} \in \mathcal{S}$. For example, take $\mathcal{X}$ as the Euclidean space $\mathbb{R}$, $\mathcal{S}=\mathcal{X}$ and the IVF $\textbf{F}: \mathcal{S} \rightarrow I(\mathbb{R})$, which is defined by
		\[\textbf{F}(x) =
		\begin{cases}
	 \frac{\sin^2x}{x}\odot \textbf{C} & \text{if}~ x\neq 0\\
	5\odot \textbf{C} & \text{if}~ x=0, 
		\end{cases}
		\]
where $\textbf{C} \in I(\mathbb{R})$  with $\textbf{C}\succeq\textbf{0}$. For all nonzero $h$ in $\mathcal{X}$, we obtain
		\begingroup
	\allowdisplaybreaks
		\begin{eqnarray*}
		&&\lim _{\lambda \to 0+}\frac{1}{\lambda}\odot\left(\textbf{F}(x+\lambda h)\ominus_{gH}\textbf{F}(x)\right)\\
		&=& \lim _{\lambda \to 0+}\frac{1}{\lambda}\odot\left( \left(\frac{\sin^2(x+\lambda h)}{x+\lambda h}\right)\odot \textbf{C}\ominus_{gH}\left(\frac{\sin^2x}{x}\right)\odot \textbf{C}\right)\\
		&=& \lim _{\lambda \to 0+}\frac{1}{\lambda}\odot\left( \left(\frac{\sin^2(x+\lambda h)}{x+\lambda h}-\frac{\sin^2x}{x}\right)\odot \textbf{C}\right)\\
		&=& \left(\frac{h\sin{2x}}{x}-\frac{h\sin^2x}{x^2}\right)\odot \textbf{C}.
		\end{eqnarray*}
	Thus,
			\[\lim_{\substack{%
				x \to 0\\
				\lambda \to 0+}}\frac{1}{\lambda}\odot\left(\textbf{F}(x+\lambda h )\ominus_{gH}\textbf{F}(x)\right) = h\odot \textbf{C},\]
				which implies
					\[\limsup_{\substack{%
				x \to 0\\
				\lambda \to 0+}}\frac{1}{\lambda}\odot\left(\textbf{F}(x+\lambda h )\ominus_{gH}\textbf{F}(x)\right) = h\odot \textbf{C}.\]
				Hence, $\textbf{F}$ is upper $gH$-Clarke differentiable at $\bar{x}=0$.
				However, the limit 
				\begin{eqnarray*}\label{key}
					&&\lim _{\lambda \to 0+}\frac{1}{\lambda}\odot\left(\textbf{F}(\bar{x}+\lambda h)\ominus_{gH}\textbf{F}(\bar{x})\right) \nonumber\\
					&=& \lim _{\lambda \to 0+}\frac{1}{\lambda}\odot\left( \left(\frac{\sin^2(\lambda h)}{\lambda h}\right)\odot \textbf{C}\ominus_{gH} 5\odot \textbf{C}\right) 
				\end{eqnarray*} 
				does not exist at $\bar{x}=0$. Consequently, $\textbf{F}$ is not $gH$-directional differentiable at $\bar{x}$.
				\endgroup
		\end{rmrk}

	\begin{rmrk}\label{00012}
Let $\mathcal{S}$ be a nonempty subset of $\mathcal{X}$ and $\textbf{F}: \mathcal{S} \rightarrow I(\mathbb{R})$ has $gH$-directional derivative at $\bar x \in \mathcal{S}$. Then, $\textbf{F}$ is not necessarily upper $gH$-Clarke differentiable at $\bar x \in \mathcal{X}$. For instance, take $\mathcal{X}$ as the Euclidean space $\mathbb{R}^2$, $\mathcal{S}= \{(x_1, x_2)\in \mathbb{R}^2 : x_2\geq 0, x_2\geq 0\}$ and the IVF $\textbf{F}: \mathcal{S}  \rightarrow I(\mathbb{R})$, which is defined by
\[
	\textbf{F}(x_1,x_2)=
	\begin{cases}
	x_1^2\left(1+\frac{1}{x_2}\right) \odot [3,~8] & \text{if } x=(x_1,x_2) \neq (0, 0)\\
	\textbf{0} & \text{otherwise}.
	\end{cases}
	\]
	Then, at $\bar{x}=(0, 0)$ and $h=(h_1, h_2)\in \mathcal{X}$ such that for sufficiently small $\lambda > 0$ so that   $\bar{x}+\lambda h \in \mathcal{S}$,  we have
	\begin{eqnarray*}
	\lim _{\lambda \to 0+}\frac{1}{\lambda}\odot\left(\textbf{F}(\bar{x}+\lambda h)\ominus_{gH}\textbf{F}(\bar{x})\right) 
	&=&\begin{cases}
	\frac{h_1^2}{h_2} \odot [3,~8] & \text{if } h_2 \neq 0\\
	\textbf{0} & \text{otherwise}.
	\end{cases}
	\end{eqnarray*}
	Hence, $\textbf{F}$ has a $gH$-directional derivative at $\bar{x}$ in every direction $h \in \mathcal{X}$.\\
	Again, for $x =(x_1, x_2)\in \mathcal{S}$ and $h=(h_1, h_2)\in \mathcal{X}$, we have
		\begingroup
		\allowdisplaybreaks
		\begin{eqnarray*}
		&&\lim _{\lambda \to 0+}\frac{1}{\lambda}\odot\left(\textbf{F}(x+\lambda h)\ominus_{gH}\textbf{F}(x)\right)\\
		&=& \lim _{\lambda \to 0+}\frac{1}{\lambda}\odot\left( (x_1+\lambda h_1)^2 \left( 1+\frac{1}{x_2+\lambda h_2}\right)\odot [3,8]\ominus_{gH}	 x_1^2\left(1+\frac{1}{x_2}\right) \odot [3,~8]\right) \\
		&=&\bigg[\min \bigg\{3\left(2x_1h_1 + \frac{2x_1h_1}{x_2}-\frac{x_1^2h_2}{x_2^2}\right),~8\left(2x_1h_1 + \frac{2x_1h_1}{x_2}-\frac{x_1^2h_2}{x_2^2}\right)\bigg\},\\
		&&~\max \bigg\{3\left(2x_1h_1 + \frac{2x_1h_1}{x_2}-\frac{x_1^2h_2}{x_2^2}\right),~8\left(2x_1h_1 + \frac{2x_1h_1}{x_2}-\frac{x_1^2h_2}{x_2^2}\right)\bigg\}\bigg].
		\end{eqnarray*}
		\endgroup
 Along $x_2=mx_1$, where $m$ is any real number, \begin{align*}
			& \lim_{\substack{%
			x \to 0\\
				\lambda \to 0+}}\frac{1}{\lambda}\odot\left(\textbf{F}(x+\lambda h )\ominus_{gH}\textbf{F}(x)\right) \\
				=& \bigg[\min \left\{ 3\left(\frac{2h_1}{m}-\frac{h_2}{m^2}\right),~8\left(\frac{2h_1}{m}-\frac{h_2}{m^2}\right)\right\},
				\max \left\{ 3\left(\frac{2h_1}{m}-\frac{h_2}{m^2}\right),~8\left(\frac{2h_1}{m}-\frac{h_2}{m^2}\right)\right\}\bigg].
\end{align*}
Hence, for $h_2>0$, $\left(\frac{2h_1}{m}-\frac{h_2}{m^2}\right)\to -\infty$ as $m \to 0.$ Consequently,
				
					\[\limsup_{\substack{%
				x \to 0\\
				\lambda \to 0+}}\frac{1}{\lambda}\odot\left(\textbf{F}(x+\lambda h )\ominus_{gH}\textbf{F}(x)\right)~\text{ does not exist}.\]
This implies that $\textbf{F}$ has no upper $gH$-Clarke derivative at $\bar{x} \in \mathcal{S}$.				
		
		\end{rmrk}

	%
	%
{The following theorem extends the well-known result from \cite{Jahn2007} for Lipschitz continuous functions to $gH$-Lipschitz continuous IVFs with the help of Lemma \ref{clarkeimply}. 
} 
	
	\begin{thm}\label{thm_lipshitz_d}
		Let $\mathcal{S}$ be a nonempty subset of $\mathcal{X}$ with $\bar{x} \in \text{int}(\mathcal{S})$ and $\textbf{F}: \mathcal{S} \rightarrow I(\mathbb{R})$ be a $gH$-Lipschitz continuous IVF at $\bar{x}$ with a Lipschitz constant $K'$. Then, $\textbf{F}$ is upper $gH$-Clarke differentiable at $\bar{x}$ and
		\[\lVert \textbf{F}_\mathscr{C}(\bar{x})(h)\rVert_{I(\mathbb{R})} \leq K'\lVert h \rVert~\text{for all}~h \in \mathcal{X}. \]
	\end{thm}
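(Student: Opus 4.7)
The plan is to leverage the endpoint decomposition of an IVF to reduce Theorem \ref{thm_lipshitz_d} to the classical Clarke theorem for real-valued Lipschitz functions from \cite{Jahn2007}, and then use item \eqref{03} of Lemma \ref{00} to transfer the bound to the interval setting. The three-step skeleton is: first, derive classical Lipschitz continuity of the endpoint functions $\underline{f}$ and $\overline{f}$ on a neighborhood of $\bar{x}$ with constant $K'$; second, apply the classical theorem to each endpoint to obtain their upper Clarke derivatives $\underline{f}^\circ(\bar{x};h)$ and $\overline{f}^\circ(\bar{x};h)$ with the bounds $|\underline{f}^\circ(\bar{x};h)|\le K'\|h\|$ and $|\overline{f}^\circ(\bar{x};h)|\le K'\|h\|$; third, invoke Lemma \ref{clarkeimply} to lift upper Clarke differentiability of the endpoints into upper $gH$-Clarke differentiability of $\textbf{F}$.

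To implement this, since $\bar{x}\in\text{int}(\mathcal{S})$ I would first shrink $\delta$ so that $\mathcal{B}(\bar{x},\delta)\subset\mathcal{S}$, and appeal to the pointwise identity
\[
\|\textbf{F}(u)\ominus_{gH}\textbf{F}(v)\|_{I(\mathbb{R})}=\max\{|\underline{f}(u)-\underline{f}(v)|,\;|\overline{f}(u)-\overline{f}(v)|\}\quad\text{for all } u,v\in\mathcal{S},
\]
which is the local counterpart of Lemma \ref{lc2}. This would immediately convert the $gH$-Lipschitz inequality into classical Lipschitz bounds $|\underline{f}(u)-\underline{f}(v)|,\;|\overline{f}(u)-\overline{f}(v)|\le K'\|u-v\|$ on $\mathcal{B}(\bar{x},\delta)$, and steps two and three of the skeleton would then yield upper $gH$-Clarke differentiability of $\textbf{F}$ at $\bar{x}$. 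For the norm estimate, whenever $x$ is close enough to $\bar{x}$ and $\lambda>0$ is small enough that $x+\lambda h\in\mathcal{B}(\bar{x},\delta)$, the same identity combined with the Lipschitz bounds would give the uniform estimate
\[
\left\|\tfrac{1}{\lambda}\odot\big(\textbf{F}(x+\lambda h)\ominus_{gH}\textbf{F}(x)\big)\right\|_{I(\mathbb{R})}=\tfrac{1}{\lambda}\,\|\textbf{F}(x+\lambda h)\ominus_{gH}\textbf{F}(x)\|_{I(\mathbb{R})}\le K'\|h\|.
\]
Taking $\limsup$ as $x\to\bar{x}$ and $\lambda\to 0+$ and applying item \eqref{03} of Lemma \ref{00} would then deliver $\|\textbf{F}_\mathscr{C}(\bar{x})(h)\|_{I(\mathbb{R})}\le K'\|h\|$.

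The main conceptual point I expect to check carefully is whether the paper's $gH$-Lipschitz hypothesis at the single point $\bar{x}$ really delivers a \emph{uniform} Lipschitz inequality for arbitrary pairs in a neighborhood of $\bar{x}$, as is needed to control $\|\textbf{F}(x+\lambda h)\ominus_{gH}\textbf{F}(x)\|_{I(\mathbb{R})}$ when both $x$ and $x+\lambda h$ vary. Reading ``$gH$-Lipschitz at $\bar{x}$'' as the standard local Lipschitz condition of nonsmooth analysis (as used in \cite{Jahn2007,clarke1990optimization}) resolves this cleanly; the weaker calmness-only reading combined with item \eqref{part22} of Lemma \ref{forfrechet} only yields $\|\textbf{F}(x+\lambda h)\ominus_{gH}\textbf{F}(x)\|_{I(\mathbb{R})}\le K'(\|x+\lambda h-\bar{x}\|+\|x-\bar{x}\|)$, which after division by $\lambda$ fails to give the required $K'\|h\|$. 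Under the standard local interpretation, the rest of the argument—endpoint transfer, classical Clarke theorem, Lemma \ref{clarkeimply}, and Lemma \ref{00}\eqref{03}—is essentially routine bookkeeping.
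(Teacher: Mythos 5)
Your proposal is correct and follows essentially the same route as the paper: reduce to the endpoint functions $\underline{f},\overline{f}$, bound their difference quotients by $K'\lVert h\rVert$, invoke the classical result (Jahn, p.~69) for existence of their upper Clarke derivatives, lift via Lemma \ref{clarkeimply}, and obtain the norm bound from Lemma \ref{00}\eqref{03}. Your side remark about the pointwise versus locally-uniform reading of ``$gH$-Lipschitz at $\bar{x}$'' identifies precisely the implicit strengthening the paper itself makes in inequality (\ref{kk}), where it bounds $\lVert\textbf{F}(x+\lambda h)\ominus_{gH}\textbf{F}(x)\rVert_{I(\mathbb{R})}$ for two points both varying near $\bar{x}$.
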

	
	\begin{proof}
		Since $\textbf{F}$ is $gH$-Lipschitz continuous on $\mathcal{S}$, for any $h\in \mathcal{X}$, we get for $\lambda >0$ that
		\begin{eqnarray}\label{kk}
			\bigg\lVert \frac{1}{\lambda}\odot\left(\textbf{F}(x+\lambda h)\ominus_{gH}\textbf{F}(x)\right) \bigg\rVert _{I(\mathbb{R})} &\leq& \frac{1}{\lambda} K' \lVert x+\lambda h - x \rVert = K' \lVert  h \rVert,
		\end{eqnarray}
		if $x$ and $\lambda$ are sufficiently close to $\bar{x}$ and $0$, respectively.
		From inequality (\ref{kk}) we have 
		
			\[ \bigg\lvert \frac{1}{\lambda} \left (\underline{f}(x+\lambda h) - \underline{f}(x) \right) \bigg\rvert \leq K' \lVert  h \rVert~\text{and}~\bigg\lvert \frac{1}{\lambda} \left (\overline{f}(x+\lambda h) - \overline{f}(x) \right) \bigg\rvert \leq K' \lVert  h \rVert.\]
			Hence, the limit superior $\underline{f}_\mathscr{C}(\bar{x})(h)$ and $\overline{f}_\mathscr{C}(\bar{x})(h)$ exist at $\bar{x}$ (cf. p. 69 of \cite{Jahn2007}). By Lemma \ref{clarkeimply}, the limit superior $
		\textbf{F}_\mathscr{C}(\bar{x})(h)$ exists.\\ Furthermore, by $gH$-Lipschitz continuity of $\textbf{F}$ on $\mathcal{S}$, we have the following for all $h \in \mathcal{X}$: 
		\begingroup
		\allowdisplaybreaks
		\begin{eqnarray*}
			\lVert \textbf{F}_\mathscr{C}(\bar{x})(h)\rVert_{I(\mathbb{R})} &=& \Bigg\lVert \limsup_{\substack{%
					x \to \bar{x}\\
					\lambda \to 0+}} \frac{1}{\lambda}\odot\left(\textbf{F}(x+\lambda h)\ominus_{gH}\textbf{F}(x)\right) \Bigg\rVert _{I(\mathbb{R})}\\
			&\leq& \limsup_{\substack{%
				x \to \bar{x}\\
 					\lambda \to 0+}}\bigg\lVert \frac{1}{\lambda}\odot\left(\textbf{F}(x+\lambda h)\ominus_{gH}\textbf{F}(x)\right) \bigg\rVert_{I(\mathbb{R})}~\text{by Lemma \ref{00}}\\
 			&\leq&  K' \lVert  h \rVert \text{ by (\ref{kk})}. 
 		\end{eqnarray*}
 	\endgroup
	\end{proof}

For convex and $gH$-Lipschitz continuous IVFs, upper $gH$-Clarke derivative and $gH$-directional derivative coincide as the next theorem states. 

\begin{thm}\label{_lipshitz}
		Let $\mathcal{X}$ be convex, and the IVF $\textbf{F}: \mathcal{X} \rightarrow I(\mathbb{R})$ be convex on $\mathcal{X}$ and $gH$-Lipschitz continuous at some $\bar{x} \in \mathcal{X}$. Then, the upper $gH$-Clarke derivative of $\textbf{F}$ at $\bar{x}$ coincides with the $gH$-directional derivative of $\textbf{F}$ at $\bar{x}$ in the direction $h \in \mathcal{X}$.
		
	\end{thm}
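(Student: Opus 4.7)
The plan is to reduce the identity to the classical real-valued fact that upper Clarke and directional derivatives coincide for convex locally Lipschitz functions, and then to lift that identity through the componentwise structure of $\ominus_{gH}$.

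First I would apply Lemma \ref{lc1} together with part \ref{lcrs} of Lemma \ref{lc2} to the hypotheses, deducing that the endpoint functions $\underline f$ and $\overline f$ are both convex on $\mathcal X$ and Lipschitz continuous on some ball $\mathcal B(\bar x,\delta)$. The classical result of nonsmooth analysis for convex locally Lipschitz real-valued functions (e.g., Proposition~2.2.7 of \cite{clarke1990optimization}; see also \cite{Jahn2007}) then yields, for every $h\in\mathcal X$,
\[
\underline f^{\,\circ}(\bar x;h):=\limsup_{\substack{x\to\bar x\\ \lambda\to 0+}}\frac{\underline f(x+\lambda h)-\underline f(x)}{\lambda}=\lim_{\lambda\to 0+}\frac{\underline f(\bar x+\lambda h)-\underline f(\bar x)}{\lambda}=:\underline f^{\,\prime}(\bar x;h),
\]
and the analogous identity $\overline f^{\,\circ}(\bar x;h)=\overline f^{\,\prime}(\bar x;h)$.

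Next, setting $u(x,\lambda):=(\underline f(x+\lambda h)-\underline f(x))/\lambda$ and $v(x,\lambda):=(\overline f(x+\lambda h)-\overline f(x))/\lambda$, the explicit formula for $\ominus_{gH}$ gives $\tfrac{1}{\lambda}\odot(\textbf F(x+\lambda h)\ominus_{gH}\textbf F(x))=[\min\{u,v\},\,\max\{u,v\}]$. Applying Definition \ref{sup} componentwise, I would rewrite
\[
\textbf F_{\mathscr C}(\bar x)(h)=\bigl[\,\limsup\nolimits_{(x,\lambda)\to(\bar x,0+)}\min\{u,v\},\ \limsup\nolimits_{(x,\lambda)\to(\bar x,0+)}\max\{u,v\}\bigr],
\]
while Definition \ref{ddd} gives $\textbf F_{\mathscr D}(\bar x)(h)=[\min\{\underline f^{\,\prime}(\bar x;h),\overline f^{\,\prime}(\bar x;h)\},\,\max\{\underline f^{\,\prime}(\bar x;h),\overline f^{\,\prime}(\bar x;h)\}]$.

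The main obstacle is to justify the two componentwise interchanges $\limsup\min\{u,v\}=\min\{\limsup u,\limsup v\}$ and $\limsup\max\{u,v\}=\max\{\limsup u,\limsup v\}$, which fail in general. I would resolve this using two ingredients: (a) the Lipschitz bound forces $u$ and $v$ to be uniformly bounded in a neighbourhood of $(\bar x,0+)$, so every sequence admits a subsequence on which both $u$ and $v$ converge; (b) by the first step, along the path $x=\bar x,\ \lambda\to 0+$ (approximated by $x_n=\bar x+\lambda_n^2 e$ to satisfy $x\neq\bar x$ in Definition \ref{dffff}, while preserving the limits thanks to Lipschitz continuity), the quotients $u$ and $v$ converge \emph{simultaneously} to $\underline f^{\,\prime}(\bar x;h)$ and $\overline f^{\,\prime}(\bar x;h)$, and these limits equal $\limsup u$ and $\limsup v$ by the identity of the first step. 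The trivial bounds $\min\{u,v\}\le u,v$ give $\limsup\min\{u,v\}\le\min\{\underline f^{\,\prime}(\bar x;h),\overline f^{\,\prime}(\bar x;h)\}$, and the matching lower bound follows by evaluating along the path in (b); for $\max$, the lower bound is the trivial $\max\{u,v\}\ge u,v$, while the matching upper bound follows from a subsequence extraction via (a) combined with the two $\limsup$ identities. Assembling the four componentwise equalities then yields $\textbf F_{\mathscr C}(\bar x)(h)=\textbf F_{\mathscr D}(\bar x)(h)$.
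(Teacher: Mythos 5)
Your proposal is correct, but it takes a genuinely different route from the paper. The paper gets $\textbf{F}_{\mathscr{D}}(\bar{x})(h)\preceq\textbf{F}_{\mathscr{C}}(\bar{x})(h)$ directly from Definitions \ref{ddd} and \ref{dffff}, and for the reverse domination it transplants the Jahn-style scalar argument into interval arithmetic: it uses the monotonicity of the interval difference quotient in $\lambda$ for convex IVFs (Lemma 3.1 of \cite{Ghosh2019derivative}) to collapse the inner supremum over $\lambda$, shrinks the $x$-ball at rate $\epsilon\alpha$, bounds the discrepancy between the quotient at $x$ and at $\bar{x}$ by $2K'\alpha$ via part (\ref{part22}) of Lemma \ref{forfrechet}, converts that norm bound into a domination via part (\ref{part21}), and lets $\alpha\to 0$. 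You instead invoke the classical scalar theorem ($f^{\circ}(\bar x;h)=f'(\bar x;h)$ for convex locally Lipschitz $f$) for each endpoint function and then confront the one genuinely interval-theoretic issue, namely that the endpoints of $\tfrac{1}{\lambda}\odot(\textbf F(x+\lambda h)\ominus_{gH}\textbf F(x))$ are $\min\{u,v\}$ and $\max\{u,v\}$ rather than $u$ and $v$. Your resolution is sound: the $\max$ interchange $\limsup\max\{u,v\}=\max\{\limsup u,\limsup v\}$ in fact holds unconditionally (no subsequence extraction needed), and for the $\min$ the only nontrivial direction is the lower bound, which you correctly obtain because both $\limsup u$ and $\limsup v$ are attained \emph{simultaneously} along the directional-derivative path $\lambda\to 0+$ — this simultaneity is exactly what the classical identity supplies and is the crux of your argument. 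Two minor caveats, neither fatal: part (\ref{lcrs}) of Lemma \ref{lc2} concerns Lipschitz continuity on a set, whereas the hypothesis is $gH$-Lipschitz continuity at the single point $\bar{x}$, so to get $\underline f,\overline f$ Lipschitz on a ball you should note that a convex function bounded above near a point is locally Lipschitz there (the paper's own proof silently makes the same strengthening); and your perturbation $x_n=\bar x+\lambda_n^2 e$ to respect $x\neq\bar x$ in Definition \ref{dffff} is handled correctly by the Lipschitz estimate. What your approach buys is economy — it outsources the analytic work to \cite{clarke1990optimization} and isolates the interval-specific content in one min/max--limsup lemma; what the paper's approach buys is self-containedness at the interval level and a template that generalizes directly to IVFs without passing through endpoint functions.
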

	
	\begin{proof}
	    Since $\textbf{F}$ is a convex IVF on $\mathcal{X}$, we get by Theorem 3.1 of \cite{Ghosh2019derivative} that the $gH$-directional derivative of $\textbf{F}$ exists at $\bar{x} \in \mathcal{X}$ in every direction $h$.
	    Also, as $\textbf{F}$ is $gH$-Lipschitz continuous at $\bar{x}$, from Theorem \ref{thm_lipshitz_d}, we get that the upper $gH$-Clarke derivative of $\textbf{F}$ exists at any $\bar{x} \in \mathcal{X}$ in every direction $h$.
        Thus, by Definitions \ref{ddd} and \ref{dffff}, we observe that
		\begin{equation}\label{hy}
			\textbf{F}_\mathscr{D}(\bar{x})(h) \preceq \textbf{F}_\mathscr{C}(\bar{x})(h)~\text{for all}~h.
		\end{equation}
		For the proof of the reverse inequality, we write
		\begingroup
		\allowdisplaybreaks
		\begin{eqnarray*}
			\textbf{F}_\mathscr{C}(\bar{x})(h) &=& \limsup_{\substack{%
					x \to \bar{x}\\
					\lambda \to 0+}}\frac{1}{\lambda}\odot\left(\textbf{F}(x+\lambda h)\ominus_{gH}\textbf{F}(x)\right)\\
			&=&  \lim_{\substack{%
					\delta \to 0+\\
					\epsilon \to 0+}}\sup_{\lVert x- \bar{x}\rVert <\delta} \sup_{0<\lambda <\epsilon}\frac{1}{\lambda}\odot\left(\textbf{F}(x+\lambda h)\ominus_{gH}\textbf{F}(x)\right).
		\end{eqnarray*}
		\endgroup
		Since $\textbf{F}$ is convex on $\mathcal{X}$, Lemma 3.1 of \cite{Ghosh2019derivative} leads to the equality
		
		\[	\textbf{F}_\mathscr{C}(\bar{x})(h) =  \lim_{\substack{%
				\delta \to 0+\\
				\epsilon \to 0+}}\sup_{\lVert x- \bar{x}\rVert < \delta} \frac{1}{\epsilon}\odot\left(\textbf{F}(x+\epsilon h)\ominus_{gH}\textbf{F}(x)\right),\]
		and for an arbitrary $\alpha >0$, 
		\[	\textbf{F}_\mathscr{C}(\bar{x})(h) =  \lim_{\substack{%
				\epsilon \to 0+}}\sup_{\lVert x- \bar{x}\rVert < \epsilon \alpha} \frac{1}{\epsilon}\odot\left(\textbf{F}(x+\epsilon h)\ominus_{gH}\textbf{F}(x)\right).\]
				Because of the $gH$-Lipschitz continuity of $\textbf{F}$ at $\bar{x}$, we have for sufficiently small $\epsilon >0$ and $\|x - \bar{x}\| < \epsilon \alpha$ that
		 	\begingroup
		\allowdisplaybreaks
		\begin{eqnarray*}
			&& \left\lVert \tfrac{1}{\epsilon}\odot\left(\textbf{F}(x+\epsilon h)\ominus_{gH}\textbf{F}(x)\right) \ominus_{gH} \tfrac{1}{\epsilon}\odot\left(\textbf{F}(\bar{x}+\epsilon h)\ominus_{gH}\textbf{F}(\bar{x})\right) \right \rVert _{I(\mathbb{R})}\\
			&\leq& \left\lVert \tfrac{1}{\epsilon}\odot\left(\textbf{F}(x+\epsilon h)\ominus_{gH}\textbf{F}(\bar{x}+\epsilon h)\right)\right \rVert _{I(\mathbb{R})}
		    +\left\lVert \tfrac{1}{\epsilon}\odot\left(\textbf{F}(x)\ominus_{gH}\textbf{F}(\bar{x})\right)\right \rVert _{I(\mathbb{R})}\\
		    && \text{by (\ref{part22}) of Lemma \ref{forfrechet}}\\
			&\leq&\tfrac{1}{\epsilon} K' \lVert x-\bar{x} \rVert + \tfrac{1}{\epsilon} K' \lVert x-\bar{x} \rVert,~\text{where}~K'~\text{is the Lipschitz constant of $\textbf{F}$ at $\bar{x} \in \mathcal{X}$}\\
			&\leq& 2K'\alpha.
		\end{eqnarray*}
		 Then, by (\ref{part21}) of Lemma \ref{forfrechet}, we have
		\begin{eqnarray*}
			\textbf{F}_\mathscr{C}(\bar{x})(h) &\preceq& \lim _{\epsilon \to 0+}\frac{1}{\epsilon}\odot\left(\textbf{F}(x+\epsilon h)\ominus_{gH}\textbf{F}(x)\right)\oplus [2K\alpha, 2K\alpha]\\
			&=& \textbf{F}_\mathscr{D}(\bar{x})(h)\oplus [2K'\alpha, 2K'\alpha].
		\end{eqnarray*}
		Since $\alpha >0$ is chosen arbitrarily, we obtain
		\begin{equation}\label{yh}
			\textbf{F}_\mathscr{C}(\bar{x})(h) \preceq \textbf{F}_\mathscr{D}(\bar{x})(h)~\text{for all}~h.
		\end{equation}
		From (\ref{hy}) and (\ref{yh}), we get
		\[\textbf{F}_\mathscr{C}(\bar{x})(h) = \textbf{F}_\mathscr{D}(\bar{x})(h).\]
		\endgroup
		
	\end{proof}

	\begin{dfn}(\emph{Sublinear IVF}). \label{sublinear_ivf}
		Let $\mathcal{S}$ be a linear subspace of $\mathcal{X}$. An IVF $\textbf{F}: \mathcal{S} \rightarrow I(\mathbb{R})$ is said to be \emph{sublinear} on $\mathcal{S}$ if
		\begin{enumerate}[(i)]
			\item $\textbf{F}(\lambda x)=\lambda\odot\textbf{F}(x)~ \text{for all}~x\in \mathcal{S}~\text{and for all}~\lambda \geq 0$, and
			\item $\textbf{F}(x+y) \nsucc \textbf{F}(x)\oplus\textbf{F}(y) $ for all $x,~y\in \mathcal{S}$.
		\end{enumerate}
	\end{dfn}
	
	\begin{example}\label{ex40}
		 Let $\mathcal{X}$ be the Euclidean space $\mathbb{R}$ and $\mathcal{S}=\mathcal{X}$. Then, the IVF $\textbf{F}: \mathcal{S} \rightarrow I(\mathbb{R})$ that is defined by
		\[\textbf{F}(x) = \lvert x \rvert \odot \textbf{C},~ \text{where}~ \textbf{C} \in I(\mathbb{R})~\text{such that}~ \textbf{C} \nprec \textbf{0},\]
		is sublinear on $\mathcal{S}$. The reason is as follows.\\
	For all $x, y \in \mathcal{S}$ and $\lambda \geq 0$, we have
		\begin{enumerate}[(i)]
			\item $ \textbf{F}(\lambda x) = \lvert  \lambda x \rvert \odot \textbf{C} = \lambda \lvert   x \rvert \odot \textbf{C}= \lambda \odot \textbf{F}(x)$.
			
			\item
			\begin{eqnarray*}
				&& \lvert  x + y \rvert \odot \textbf{C} \nsucc  \big(\lvert  x  \rvert + \lvert y \rvert \big)\odot \textbf{C} ~\text{by (\ref{c1}) Lemma \ref{last}}  \\
				&\implies& \lvert  x + y \rvert \odot \textbf{C} \nsucc  \lvert  x \rvert \odot \textbf{C} \oplus \lvert  y \rvert \odot \textbf{C}~\text{since}~\lvert   x \rvert~\text{and}~ \lvert   y \rvert~\text{are nonnegative} \\
				&\implies& \textbf{F}(x+y) \nsucc \textbf{F}(x)\oplus\textbf{F}(y).
			\end{eqnarray*}
		\end{enumerate}
		Hence, $\textbf{F}$ is a sublinear IVF on $\mathcal{S}$.
		
	\end{example}

	\begin{example}\label{lemma25}
		Let $Q$ be a real positive definite matrix of order $n\times n$ and $\mathcal{S}$ be a linear subspace of $\mathcal{X}$. Consider the IVF $\textbf{F}: \mathcal{S} \rightarrow I(\mathbb{R})$, which is defined by
		\[\textbf{F}(x) = \big( \sqrt{x^TQx} \big)\odot \textbf{C},~\text{where}~\textbf{C} \nprec \textbf{0}.\]
		Then, $\textbf{F}$ is a sublinear IVF on $\mathcal{S}$. The reason is as follows.\\
		
		The function $\textbf{F}(x)$ can be written as $g(x)\odot \textbf{C}$, where $g(x)= \sqrt{ x^TQx}$. By Example 1.2.3 of \cite{Hiriart2012}, $g$ satisfies the following conditions:
		\begin{enumerate}[(a)]
			\item for $\lambda \geq 0$ and $x \in \mathcal{S}$,
			\begin{equation}\label{xqx}
				g(\lambda x) = \lambda g(x),
			\end{equation}
			and
			\item for all $x, y \in \mathcal{S}$
			\begin{equation}\label{xtqx}
			g(x+y)\leq g(x) + g(y).
			\end{equation}
		\end{enumerate}
		From (\ref{xqx}), we have
		\[g(\lambda x) \odot \textbf{C} = \lambda g(x)\odot \textbf{C},~\text{or},~ \textbf{F}(\lambda x) = \lambda \odot \textbf{F}(x).\]
		Since $\textbf{C} \nprec \textbf{0}$, from (\ref{xtqx}) and Lemma \ref{last}, we obtain
		\begin{eqnarray*}
			&&g(x+y) \odot \textbf{C} \nsucc  ( g(x) + g(y)) \odot \textbf{C} \\
			&\implies& (g(x+y)) \odot \textbf{C} \nsucc   g(x)\odot \textbf{C} \oplus g(y) \odot \textbf{C}~\text{since}~g(x)~\text{and}~ g(y)~\text{are nonnegative} \\
			&\implies& \textbf{F}(x+y) \nsucc \textbf{F}(x)\oplus\textbf{F}(y).
		\end{eqnarray*}
		Hence, $\textbf{F}$ is a sublinear IVF on $\mathcal{S}$.
	\end{example}
	\begin{example}\label{convexsub}
		Let $\mathcal{S}$ be a linear subspace of $\mathcal{X}$ and $\textbf{F}: \mathcal{S} \rightarrow I(\mathbb{R})$ be a convex
		IVF on $\mathcal{S}$  such that for all $x \in \mathcal{S}$,
		\begin{equation}\label{cnv}\textbf{F}(\alpha x) = \alpha \odot \textbf{F}(x)~\text{ for every}~ \alpha \geq 0.
		\end{equation}
Then, $\textbf{F}$ is a sublinear IVF on $\mathcal{S}$. The reason is as follows.\\
		For $x, y \in \mathcal{S}$ and $\lambda_1, \lambda_2 >0$, we have
		\begin{eqnarray*}
			\textbf{F}(\lambda_1x + \lambda_2 y)&=&
			\textbf{F}\Big(\lambda \Big( \tfrac{\lambda_1}{\lambda}x + \tfrac{\lambda_2}{\lambda}y \Big)\Big),~\text{where}~\lambda = \lambda_1+\lambda_2\\
			&=& \lambda \odot\textbf{F} \Big( \tfrac{\lambda_1}{\lambda}x + \tfrac{\lambda_2}{\lambda}y \Big)~\text{by}~(\ref{cnv})\\
			&\preceq& \lambda_1 \odot \textbf{F}(x) \oplus \lambda_2 \odot \textbf{F}(y)~\text{by the convexity of}~\textbf{F}.
		\end{eqnarray*}
		Taking $\lambda_1 = \lambda_2 = 1$, we obtain
		\[\textbf{F}(x+y) \preceq \textbf{F}(x) \oplus \textbf{F}(y)~\text{for all}~x, y \in \mathcal{S}.\]
		Hence, $\textbf{F}$ is a sublinear IVF on $\mathcal{S}$.
		\end{example}

		\begin{rmrk}\label{bq}
			A sublinear IVF may not be convex. For instance, take $\mathcal{X}$ as the Euclidean space $\mathbb{R}$, $\mathcal{S}=\mathcal{X}$ and the IVF $\textbf{F}: \mathcal{S} \rightarrow I(\mathbb{R})$ that is given by
		\[\textbf{F}(x) = \lvert x \rvert \odot [-3, 2].\]
		Clearly, by Example \ref{ex40}, $\textbf{F}$ is a sublinear IVF on $\mathcal{S}$. However, $\underline{f}(x)= -3\lvert x \rvert  $ is not convex on $\mathcal{S}$. Therefore, by Lemma \ref{lc1}, $\textbf{F}$ is not a convex IVF on $\mathcal{S}$.
		\end{rmrk}

	\begin{thm}\label{thm sub}
		Let $\mathcal{S}$ be a subset of $\mathcal{X}$ with nonempty interior, and let $\textbf{F}: \mathcal{S} \rightarrow I(\mathbb{R})$ be an IVF that is upper $gH$-Clarke differentiable at $\bar{x}\in int(\mathcal{S})$. Then, the upper $gH$-Clarke derivative $\textbf{F}_\mathscr{C}(\bar{x})$ of $\textbf{F}$ is a sublinear IVF on $\mathcal{S}$.
		
	\end{thm}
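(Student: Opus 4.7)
The plan is to verify the two defining clauses of sublinearity in Definition \ref{sublinear_ivf} for the mapping $h \mapsto \textbf{F}_\mathscr{C}(\bar{x})(h)$ on $\mathcal{X}$.

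For positive homogeneity, the case $\lambda = 0$ is immediate because $\textbf{F}(x) \ominus_{gH} \textbf{F}(x) = \textbf{0}$ forces the limsup in Definition \ref{dffff} to be $\textbf{0}$, which equals $0 \odot \textbf{F}_\mathscr{C}(\bar{x})(h)$. For $\lambda > 0$, I would substitute $s = \lambda t$ inside the limsup evaluated at the direction $\lambda h$; since the map $t \mapsto \lambda t$ is a positive scaling we have $s \to 0^+$ iff $t \to 0^+$, and the limsup becomes
\[
\textbf{F}_\mathscr{C}(\bar{x})(\lambda h) = \limsup_{x \to \bar{x},\, t \to 0^+} \tfrac{\lambda}{t} \odot \bigl(\textbf{F}(x + t h) \ominus_{gH} \textbf{F}(x)\bigr).
\]
Pulling the nonnegative scalar $\lambda$ outside the limsup by Lemma \ref{00}(\ref{02}) yields $\lambda \odot \textbf{F}_\mathscr{C}(\bar{x})(h)$.

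For the subadditivity clause $\textbf{F}_\mathscr{C}(\bar{x})(h_1+h_2) \nsucc \textbf{F}_\mathscr{C}(\bar{x})(h_1) \oplus \textbf{F}_\mathscr{C}(\bar{x})(h_2)$, the central ingredient is a pointwise decomposition supplied by Lemma \ref{forfrechet}(\ref{aaaassaa}). Applying that lemma with $\textbf{A} = \textbf{F}(x + \lambda(h_1+h_2))$, $\textbf{C} = \textbf{F}(x + \lambda h_1)$, $\textbf{B} = \textbf{F}(x)$, and scaling through by $\tfrac{1}{\lambda} > 0$, I obtain for every $x$ near $\bar{x}$ and sufficiently small $\lambda > 0$ the pointwise relation
\[
\tfrac{1}{\lambda}\odot\bigl(\textbf{F}(x+\lambda(h_1+h_2)) \ominus_{gH} \textbf{F}(x)\bigr) \nsucc \tfrac{1}{\lambda} \odot\bigl(\textbf{F}(x+\lambda h_1+\lambda h_2) \ominus_{gH} \textbf{F}(x+\lambda h_1)\bigr) \oplus \tfrac{1}{\lambda} \odot\bigl(\textbf{F}(x+\lambda h_1) \ominus_{gH} \textbf{F}(x)\bigr).
\]
I then take the limit superior as $x \to \bar{x}$ and $\lambda \to 0^+$ of both sides. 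On the right, Lemma \ref{00}(\ref{01}) pushes the limsup past $\oplus$, and the substitution $y = x + \lambda h_1$, a bijection in $x$ for each fixed $\lambda > 0$ satisfying $y \to \bar{x}$ exactly when $x \to \bar{x}$ and $\lambda \to 0^+$, matches the limsup of the first summand with the defining expression for $\textbf{F}_\mathscr{C}(\bar{x})(h_2)$, while the second summand's limsup is $\textbf{F}_\mathscr{C}(\bar{x})(h_1)$.

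I expect the main obstacle to be the passage of the pointwise non-dominance relation through the limsup: unlike scalar $\le$, the relation $\nsucc$ is not automatically preserved under limit superior. The resolution is to chain the pointwise $\nsucc$ of Lemma \ref{forfrechet}(\ref{aaaassaa}) with the $\preceq$ bound furnished by Lemma \ref{00}(\ref{01}) using a transitivity fact read off Definition \ref{interval_dominance}: if $X \nsucc Y$ and $Y \preceq Z$, then $X \nsucc Z$. Composing these two relations at the limit level routes the pointwise non-dominance into the desired subadditivity $\textbf{F}_\mathscr{C}(\bar{x})(h_1+h_2) \nsucc \textbf{F}_\mathscr{C}(\bar{x})(h_1) \oplus \textbf{F}_\mathscr{C}(\bar{x})(h_2)$.
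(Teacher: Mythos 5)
Your proof follows essentially the same route as the paper's: positive homogeneity via the rescaling of $\lambda$ by the nonnegative factor together with Lemma \ref{00}(\ref{02}), and subadditivity via the three-term decomposition supplied by Lemma \ref{forfrechet}(\ref{aaaassaa}) followed by Lemma \ref{00}(\ref{01}) (the paper inserts the intermediate point $x+\lambda h_2$ rather than $x+\lambda h_1$, which is immaterial). The delicate step you flag --- carrying the pointwise relation $\nsucc$ through the limit superior --- is precisely the step the paper asserts without further comment, so your treatment is consistent with, and slightly more explicit than, the published argument.
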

	
	\begin{proof}
		For  an arbitrary $h \in \mathcal{S}$ and $\alpha \geq 0$, we have
		\begin{eqnarray*}
		 \limsup_{\substack{%
					x \to \bar{x}\\
					\lambda \to 0+}}\frac{1}{\lambda}\odot\left(\textbf{F}(x+\lambda \alpha h)\ominus_{gH}\textbf{F}(x)\right)
			&=& \alpha\odot ( \limsup_{\substack{%
					x \to \bar{x}\\
					\lambda \to 0+}}\tfrac{1}{\lambda \alpha}\odot\left(\textbf{F}(x+\lambda \alpha h) \ominus_{gH} \textbf{F}(x)\right))\\
			&=& \alpha \odot \textbf{F}_\mathscr{C}(\bar{x})(h).
			\end{eqnarray*}
		Thus, $\textbf{F}_\mathscr{C}(\bar{x})(\alpha h)=\alpha \odot \textbf{F}_\mathscr{C}(\bar{x})(h).$\\
		Next, for all $h_1, h_2 \in \mathcal{S}$, we get
		\begingroup
		\allowdisplaybreaks
		\begin{eqnarray*}
			&&\textbf{F}_\mathscr{C}(\bar{x})( h_1+h_2)\\ &=& \limsup_{\substack{%
					x \to \bar{x}\\
					\lambda \to 0+}}\frac{1}{\lambda}\odot\left(\textbf{F}(x+\lambda( h_1+h_2))\ominus_{gH}\textbf{F}(x)\right)\\
				&\nsucc& \limsup_{\substack{%
					x \to \bar{x}\\
					\lambda \to 0+}}\frac{1}{\lambda}\odot\Big[\textbf{F}(x+\lambda h_1 + \lambda h_2)\ominus_{gH} \textbf{F}(x + \lambda h_2)\oplus \textbf{F}(x + \lambda h_2)\ominus_{gH} \textbf{F}(x) \Big],\\
				&&\text{by \eqref{aaaassaa} of Lemma \ref{forfrechet}}\\
			&=&\textbf{F}_\mathscr{C}(\bar{x})( h_1) \oplus \textbf{F}_\mathscr{C}(\bar{x})(h_2).	
			\end{eqnarray*}
			\endgroup
		Hence, $\textbf{F}_\mathscr{C}(\bar{x})$ is a sublinear IVF on $\mathcal{S}$.
	\end{proof}

	\section{Conclusion and Future Directions} \label{sect6}

In this article, mainly three concepts on IVFs have been studied---limit superior of IVF (Definition \ref{sup}), upper $gH$-Clarke derivative (Definition \ref{dffff}), and sublinear IVF (Definition \ref{sublinear_ivf}). One can trivially notice that in the degenerate case, each of the Definitions \ref{sup}, \ref{dffff}, and \ref{sublinear_ivf} reduces to the respective conventional definition for the real-valued functions. It has been observed that for a $gH$-Lipschitz continuous IVF, the upper $gH$-Clarke derivative always exists 
(Theorem \ref{thm_lipshitz_d}). Also, for a $gH$-Lipschitz continuous IVF, it has been found that the $gH$-directional derivative of a convex IVF coincides with the upper $gH$-Clarke derivative (Theorem \ref{_lipshitz}). 
It has been noticed that the upper $gH$-Clarke derivative at an interior point of the domain of an IVF is a sublinear IVF (Theorem \ref{thm sub}).\\

In analogy to the current study, future research can be carried out for other generalized directional derivatives for IVFs, e.g., Dini, Hadamard, Michel-Penot, etc., and their relationships \cite{demyanov2002rise}. In parallel to the proposed analysis of IVFs, another promising direction of future research can be the analysis of the fuzzy-valued functions (FVFs) as the alpha-cuts of fuzzy numbers are compact intervals \cite{ghosh2019introduction}. Hence, in future, one can attempt to extend the proposed idea of $gH$-Clarke derivative for  fuzzy-valued functions.  \\

The applications of the proposed upper $gH$-Clarke derivative in control systems and differential equations in a noisy or uncertain environment can be also dealt with in the future. 
A control system or a differential equation in noisy environment inevitably appears due to the incomplete information (e.g., demand for a product) or unpredictable changes (e.g., changes in the climate) in the system. The general control problem in a noisy or uncertain environment that we shall consider to study is the following:
\begin{align*}
 \min ~&~ x(u) \\
 \text{subject to} ~&~ \frac{dx}{dt} = \textbf{H}(x(t))\oplus \textbf{F}(u(t)), \\
                   ~&~ x(0) = [\alpha, \alpha],~x(t) \in I(\mathbb{R}),~u(t)\in U~\text{and}~t\in [0, T], 
\end{align*}
where $x:[0, T] \rightarrow I(\mathbb{R})$ is the unknown function, $u : [0, T] \rightarrow U \subset \mathbb{R}$ is the control variable, $T\in\mathbb{R}_{+}$ and $\alpha \in \mathbb{R} $. Here, $\textbf{H}$ and $\textbf{F}$ are upper $gH$-Clarke and $gH$-Fr\'echet differentiable IVFs with respect to the control variable $u$. In such a control problem, we shall show the usefulness of the proposed upper $gH$-Clarke derivative to find the optimal control of the system.\\

	\appendix
	 \section{Proof of Lemma \ref{last}} \label{appendix_co}
	\begin{proof} Let $\textbf{C}=[\underline{c},~\overline{c}]$.
	\begin{enumerate}[(i)]
	\item If $\textbf{C}\succeq \textbf{0}$, then
		\begingroup
		\allowdisplaybreaks
	\begin{align*}
	&\underline{c}\geq 0~\text{and}~\overline{c}\geq 0\\
	\implies&\lvert x \rvert \underline{c}+\lvert y \rvert \underline{c} \geq \lvert x+y \rvert \underline{c}~\text{and}~\lvert x \rvert \overline{c}+\lvert y \rvert \overline{c} \geq \lvert x+y \rvert \overline{c}\\
	\implies& \lvert x+y \rvert \odot \textbf{C} \preceq \lvert x \rvert \odot \textbf{C} \oplus  \lvert y \rvert \odot \textbf{C}.
	\end{align*}
	
	\item If $\textbf{C}\preceq \textbf{0}$, then
	\begin{align*}
	&\underline{c}\leq 0~\text{and}~\overline{c}\leq 0\\
	\implies&\lvert x \rvert \underline{c}+\lvert y \rvert \underline{c} \leq \lvert x+y \rvert \underline{c}~\text{and}~\lvert x \rvert \overline{c}+\lvert y \rvert \overline{c} \leq \lvert x+y \rvert \overline{c}\\
	\implies& \lvert x+y \rvert \odot \textbf{C} \succeq \lvert x \rvert \odot \textbf{C} \oplus  \lvert y \rvert \odot \textbf{C}.
	\end{align*}
	
	\end{enumerate}
	\endgroup
	\end{proof}
	
	\section{Proof of Lemma \ref{forfrechet}} \label{appendix frechet}
\begin{proof} Let $\textbf{A} = [\underline{a}, \overline{a}], ~\textbf{B} = [\underline{b}, \overline{b}]$ and $\textbf{C} = [\underline{c}, \overline{c}]$.\\
\begin{enumerate}[(i)]
\item\label{cgh} We have the following four possible cases.
		\begin{enumerate}[$\bullet$ \textbf{Case} 1.]
			\item Let $\overline{a}-\overline{c}\geq\underline{a}-\underline{c}$ and $\overline{c}-\overline{b}\geq\underline{c}-\underline{b}$.
			Then, $\overline{a}-\overline{b}\geq\underline{a}-\underline{b}$ and
			\[
			 (\textbf{A}\ominus_{gH}\textbf{C})\oplus(\textbf{C}\ominus_{gH}\textbf{B}) = [\underline{a}-\underline{c}, \overline{a}-\overline{c}]\oplus[\underline{c}-\underline{b},\overline{c}-\overline{b}] =[\underline{a}-\underline{b},\overline{a}-\overline{b}]=\textbf{A}\ominus_{gH}\textbf{B}.
			\]

			\item Let $\overline{a}-\overline{c}\leq\underline{a}-\underline{c}$ and $\overline{c}-\overline{b}\leq\underline{c}-\underline{b}$.
			Therefore, $\overline{a}-\overline{b}\leq\underline{a}-\underline{b}$ and
			\[
			 (\textbf{A}\ominus_{gH}\textbf{C})\oplus(\textbf{C}\ominus_{gH}\textbf{B}) = [\overline{a}-\overline{c}, \underline{a}-\underline{c}]\oplus[\overline{c}-\overline{b},\underline{c}-\underline{b}] =[\overline{a}-\overline{b},\underline{a}-\underline{b}]=\textbf{A}\ominus_{gH}\textbf{B}.
			\]

			\item Let $\overline{a}-\overline{c}<\underline{a}-\underline{c}$ and $\overline{c}-\overline{b}>\underline{c}-\underline{b}$.
			Therefore,
			\[
			 (\textbf{A}\ominus_{gH}\textbf{C})\oplus(\textbf{C}\ominus_{gH}\textbf{B}) = [\overline{a}-\overline{c}, \underline{a}-\underline{c}]\oplus[\underline{c}-\underline{b},\overline{c}-\overline{b}] =[\overline{a}-\overline{c}+\underline{c}-\underline{b}, \underline{a}-\underline{c}+\overline{c}-\overline{b}].
			\]
			If possible, let
			\begin{equation}\label{zxcvc}
				 (\textbf{A}\ominus_{gH}\textbf{C})\oplus(\textbf{C}\ominus_{gH}\textbf{B}) \prec \textbf{A}\ominus_{gH}\textbf{B}.
			\end{equation}
			If $\overline{a}-\overline{b}\geq\underline{a}-\underline{b}$, then from \eqref{zxcvc} we get
			\begin{eqnarray*}
				 &&[\overline{a}-\overline{c}+\underline{c}-\underline{b}, \underline{a}-\underline{c}+\overline{c}-\overline{b}] \prec[\underline{a}-\underline{b},\overline{a}-\overline{b}]\\
				&\Longrightarrow& \underline{a}-\underline{c}+\overline{c}-\overline{b} \leq \overline{a}-\overline{b}\\
				&\Longrightarrow& \underline{a}-\underline{c} \leq \overline{a}-\overline{c},~\text{ which is an impossibility}.
			\end{eqnarray*}
			Further, if $\overline{a}-\overline{b}\leq\underline{a}-\underline{b}$, then from \eqref{zxcvc}, we have
				\begingroup
		\allowdisplaybreaks
			\begin{eqnarray*}
				 &&[\overline{a}-\overline{c}+\underline{c}-\underline{b}, \underline{a}-\underline{c}+\overline{c}-\overline{b}] \prec[\overline{a}-\overline{b},\underline{a}-\underline{b}]\\
				&\Longrightarrow& \underline{a}-\underline{c}+\overline{c}-\overline{b} \le \underline{a}-\underline{b}\\
				&\Longrightarrow& \overline{c}-\overline{b} \le \underline{c}-\underline{b},~\text{which is an impossibility}.
			\end{eqnarray*}
		 Thus, \eqref{zxcvc} is not true.
			
			\item Let $\overline{a}-\overline{c}>\underline{a}-\underline{c}$ and $\overline{c}-\overline{b}<\underline{c}-\underline{b}$. Proceeding as in \textbf{Case} $3$ of \eqref{cgh} we can prove that \eqref{zxcvc} is not true. Hence,
		\end{enumerate}
		\[
		 (\textbf{A}\ominus_{gH}\textbf{C})\oplus(\textbf{C}\ominus_{gH}\textbf{B}) \nprec \textbf{A}\ominus_{gH}\textbf{B}.
		\]
		\endgroup
 
	\item As
		${\lVert \textbf{B} \ominus_{gH} \textbf{A} \rVert}_{I(\mathbb{R})} = \max \{|\underline{b}-\underline{a}|, |\overline{b}-\overline{a}|\},$ we break the proof in two cases.

		\begin{enumerate}[$\bullet$ \textbf{Case} 1.]
			\item  If $(L = )~ {\lVert \textbf{B} \ominus_{gH} \textbf{A} \rVert}_{I(\mathbb{R})} = |\underline{b}-\underline{a}|$, then
			\begin{equation}\label{align}
			  |\underline{b}-\underline{a}| \geq |\overline{b}-\overline{a}| \implies |\underline{b}-\underline{a}| \geq \overline{b}-\overline{a} \implies   \overline{b} \leq \overline{a}+L.
			\end{equation}
			
			Since $ \underline{b}-\underline{a} \leq |\underline{b}-\underline{a}|$, then
			\begin{equation}\label{jkl}
				\underline{b} \leq \underline{a}+L.
			\end{equation}
			From (\ref{align}) and (\ref{jkl}), we have
			\[\textbf{B} \preceq \textbf{A}\oplus [L, L].\]
			
			\item If $(L = )~ {\lVert \textbf{B} \ominus_{gH} \textbf{A} \rVert}_{I(\mathbb{R})} = |\overline{b}-\overline{a}|$, then
			\begin{equation}\label{ymmm}
				 |\underline{b}-\underline{a}| \leq |\overline{b}-\overline{a}| \implies \underline{b}-\underline{a} \leq |\overline{b}-\overline{a}|  \implies \underline{b} \leq \underline{a}+L.
			\end{equation}
			Since $ \overline{b}-\overline{a} \leq |\overline{b}-\overline{a}|$,
			\begin{equation}\label{nnnn}
				\overline{b} \leq \overline{a}+L.
			\end{equation}
			From (\ref{ymmm}) and (\ref{nnnn}), we obtain
			\[\textbf{B} \preceq \textbf{A}\oplus [L, L], ~\text{where}~ L=\lVert \textbf{B}\ominus_{gH}\textbf{A} \rVert_{I(\mathbb{R})}.\]
		\end{enumerate}

		\item\label{cvbvbb} If possible, let there exist $\textbf{A},~\textbf{B},~\textbf{C}$ and $\textbf{D}$ in $I(\mathbb{R})$ such that
		\begin{equation}\label{impor}
			{\lVert(\textbf{A}\ominus_{gH}\textbf{B})\ominus_{gH} (\textbf{C}\ominus_{gH}\textbf{D}) \rVert}_{I(\mathbb{R})}
			>
			\lVert \textbf{A}\ominus_{gH}\textbf{C} \rVert_{I(\mathbb{R})} \oplus \lVert \textbf{B}\ominus_{gH}\textbf{D} \rVert_{I(\mathbb{R})}.
		\end{equation}
		According to the definition of $gH$-difference of two intervals,	
		\begin{equation}\label{p3}
			\text{either}~ \textbf{A}\ominus_{gH} \textbf{B}= [\underline{a}-\underline{b}, \overline{a}-\overline{b}] ~\text{or}~ \textbf{A}\ominus_{gH} \textbf{B}= [\overline{a}-\overline{b}, \underline{a}-\underline{b}],
		\end{equation}
		\begin{equation*}
			\text{either}~ \textbf{C}\ominus_{gH} \textbf{D}= [\underline{c}-\underline{d}, \overline{c}-\overline{d}]~\text{or}~\textbf{C}\ominus_{gH} \textbf{D}= [\overline{c}-\overline{d}, \underline{c}-\underline{d}],
		\end{equation*}
		\begin{equation}\label{p11}
			\text{either}~ \textbf{A}\ominus_{gH} \textbf{C}= [\underline{a}-\underline{c}, \overline{a}-\overline{c}] ~\text{or}~ \textbf{A}\ominus_{gH} \textbf{B}= [\overline{a}-\overline{c}, \underline{a}-\underline{c}],
		\end{equation}
		and
		\begin{equation*}
			\text{either}~ \textbf{B}\ominus_{gH} \textbf{D}= [\underline{b}-\underline{d}, \overline{b}-\overline{d}]~\text{or}~\textbf{B}\ominus_{gH} \textbf{D}= [\overline{b}-\overline{d}, \underline{b}-\underline{d}].
		\end{equation*}
	Then, one of the following holds true:
		\begin{enumerate}[(a)]
			\item $(\textbf{A} \ominus_{gH} \textbf{B})\ominus_{gH}(\textbf{C}\ominus_{gH} \textbf{D})= [\underline{a}-\underline{b}-\underline{c}+\underline{d},~ \overline{a}-\overline{b}-\overline{c}+\overline{d} ]$

			\item $(\textbf{A} \ominus_{gH} \textbf{B})\ominus_{gH}(\textbf{C}\ominus_{gH} \textbf{D})= [\underline{a}-\underline{b}-\overline{c}+\overline{d},~ \overline{a}-\overline{b}-\underline{c}+\underline{d} ]$
			
			\item $(\textbf{A} \ominus_{gH} \textbf{B})\ominus_{gH}(\textbf{C}\ominus_{gH} \textbf{D})= [ \overline{a}-\overline{b}-\overline{c}+\overline{d},~\underline{a}-\underline{b}-\underline{c}+\underline{d} ]$

			\item $(\textbf{A} \ominus_{gH} \textbf{B})\ominus_{gH}(\textbf{C}\ominus_{gH} \textbf{D})= [\overline{a}-\overline{b}-\underline{c}+\underline{d},~ \underline{a}-\underline{b}-\overline{c}+\overline{d} ]$
			
		\end{enumerate}
\begin{enumerate}[$\bullet$ \textbf{Case} 1.]
\item \label{case-1}
Let $(\textbf{A} \ominus_{gH} \textbf{B})\ominus_{gH}(\textbf{C}\ominus_{gH} \textbf{D})= [\underline{a}-\underline{b}-\underline{c}+\underline{d},~ \overline{a}-\overline{b}-\overline{c}+\overline{d} ]$.
\begin{enumerate}[(a)]
 \item If ${\lVert(\textbf{A}\ominus_{gH}\textbf{B})\ominus_{gH} (\textbf{C}\ominus_{gH}\textbf{D}) \rVert}_{I(\mathbb{R})} = \lvert \underline{a}-\underline{b}-\underline{c}+\underline{d} \rvert$, then from equation (\ref{impor}), we have
 \begin{equation*}
 \lvert \underline{a}-\underline{b}-\underline{c}+\underline{d} \rvert > \lvert \underline{a}-\underline{c}\rvert + \lvert \underline{b}-\underline{d}\rvert > \lvert \underline{a}-\underline{b}-\underline{c}+\underline{d} \rvert,
 \end{equation*}
 which is impossible.

 \item If ${\lVert(\textbf{A}\ominus_{gH}\textbf{B})\ominus_{gH} (\textbf{C}\ominus_{gH}\textbf{D}) \rVert}_{I(\mathbb{R})} = \lvert \overline{a}-\overline{b}-\overline{c}+\overline{d} \rvert$, then from equation (\ref{impor}), we have
 \begin{equation*}
 \lvert \overline{a}-\overline{b}-\overline{c}+\overline{d} \rvert > \lvert \overline{a}-\overline{c}\rvert + \lvert \overline{b}-\overline{d}\rvert > \lvert \overline{a}-\overline{b}-\overline{c}+\overline{d} \rvert,
 \end{equation*}
 which is again impossible.
\end{enumerate}

\item \label{case_2}
Let $(\textbf{A} \ominus_{gH} \textbf{B})\ominus_{gH}(\textbf{C}\ominus_{gH} \textbf{D})= [\overline{a}-\overline{b}-\overline{c}+\overline{d},~ \underline{a}-\underline{b}-\underline{c}+\underline{d} ]$.\\
For this case, two subcases are similar to the \textbf{Case} $1$ of \eqref{cvbvbb} will lead to impossibilities.

\item \label{case_3}
Let $(\textbf{A} \ominus_{gH} \textbf{B})\ominus_{gH}(\textbf{C}\ominus_{gH} \textbf{D})= [\underline{a}-\underline{b}-\overline{c}+\overline{d},~ \overline{a}-\overline{b}-\underline{c}+\underline{d} ]$. Then,
\begin{equation}\label{eqn12}
\underline{a}-\underline{b} \leq \overline{a}-\overline{b}~\text{and}~\overline{c}-\overline{d} \leq \underline{c}-\underline{d}.
\end{equation}

\begin{enumerate}[(a)]
 \item If ${\lVert(\textbf{A}\ominus_{gH}\textbf{B})\ominus_{gH} (\textbf{C}\ominus_{gH}\textbf{D}) \rVert}_{I(\mathbb{R})} = \lvert \overline{a}-\overline{b}-\underline{c}+\underline{d} \rvert$, then $\overline{a}-\overline{b}-\underline{c}+\underline{d} \geq 0.$
 From equation (\ref{impor}), we have
 \begin{equation*}
 \lvert \overline{a}-\overline{b}-\underline{c}+\underline{d} \rvert > \lvert \overline{a}-\overline{c}\rvert + \lvert \overline{b}-\overline{d}\rvert \implies \overline{c}-\overline{d} > \underline{c}-\underline{d},
 \end{equation*}
 which is contradictory to (\ref{eqn12}).

 \item If ${\lVert(\textbf{A}\ominus_{gH}\textbf{B})\ominus_{gH} (\textbf{C}\ominus_{gH}\textbf{D}) \rVert}_{I(\mathbb{R})} = \lvert \underline{a}-\underline{b}-\overline{c}+\overline{d} \rvert$, then $\underline{a}-\underline{b}-\overline{c}+\overline{d} < 0.$
 From equation (\ref{impor}), we have
 \begin{equation*}
-(\underline{a}-\underline{b}-\overline{c}+\overline{d}) = \lvert \underline{a}-\underline{b}-\overline{c}+\overline{d} \rvert > \lvert \underline{a}-\underline{c}\rvert + \lvert \underline{b}-\underline{d}\rvert \implies \overline{c}-\overline{d} > \underline{c}-\underline{d},
 \end{equation*}
 which is again contradictory to (\ref{eqn12}).
\end{enumerate}

\item \label{case_4}
Let $(\textbf{A} \ominus_{gH} \textbf{B})\ominus_{gH}(\textbf{C}\ominus_{gH} \textbf{D})= [\overline{a}-\overline{b}-\underline{c}+\underline{d},~\underline{a}-\underline{b}-\overline{c}+\overline{d} ]$.\\
All the two subcases for this case are similar to  \textbf{Case} $3$ of \eqref{cvbvbb}.
\end{enumerate}
Hence, (\ref{impor}) is wrong, and thus the result follows.
\end{enumerate}
	\end{proof}
\section{Proof of Lemma \ref{lc2}} \label{aplc1}
\begin{proof}
\begin{enumerate}[(i)]
    \item Let $\textbf{F}$ be $gH$-continuous at $\bar{x} \in \mathcal{S}$. Thus, for any $d\in\mathbb{R}^n$ such that $\bar{x}+d\in\mathcal{S}$,
\[
\lim_{\lVert d \rVert\to 0}\left(\textbf{F}(\bar{x}+d)\ominus_{gH}\textbf{F}(\bar{x})\right)=\textbf{0},
\]
which implies
\[
\lim_{\lVert d \rVert\to 0}(\underline{f}(\bar{x}+d)-\underline{f}(\bar{x}))\to 0\ \mbox{and}\ \lim_{\lVert d \rVert\to 0}(\overline{f}(\bar{x}+d)-\overline{f}(\bar{x}))\to 0,
\]
i.e., $\underline{f}$ and $\overline{f}$ are continuous at $\bar{x}\in\mathcal{S}$.\\

Conversely, let the functions $\underline{f}$ and $\overline{f}$ be continuous at $\bar{x}\in\mathcal{S}$. If possible, let $\textbf{F}$ be not $gH$-continuous at $\bar{x}$. Then, as $\lVert d \rVert\to 0,\ (\textbf{F}(\bar{x}+d)\ominus_{gH}\textbf{F}(\bar{x}))\not\to\textbf{0}$. Therefore, as $\lVert d \rVert\to 0$ at least one of the functions $(\underline{f}(\bar{x}+d)-\underline{f}(\bar{x}))$ and $(\overline{f}(\bar{x}+d)-\overline{f}(\bar{x}))$ does not tend to $0$. So it is clear that at least one of the functions $\underline{f}$ and $\overline{f}$ is not continuous at $\bar{x}$. This contradicts the assumption that the functions $\underline{f}$ and $\overline{f}$ both are continuous at $\bar{x}$. Hence, $\textbf{F}$ is $gH$-continuous at $\bar{x}$. \\
\item Let $\textbf{F}$ be $gH$-Lipschitz continuous on $\mathcal{S}$. Thus, there exists $K>0$ such that for any $x,~y \in \mathcal{X}$ we have
		\begin{eqnarray*}
			&&{\lVert \textbf{F}(x) \ominus_{gH} \textbf{F}(y) \rVert}_{I(\mathbb{R})} \leq K {\lVert x-y \rVert}\\
		&\implies& \left \lvert \underline{f}(x) - \underline{f}(y) \right\rvert \leq K \lVert  x-y \rVert~\text{and}~\left\lvert \overline{f}(x) - \overline{f}(y) \right\rvert \leq K \lVert  x-y \rVert.
		\end{eqnarray*}
Hence, $\underline{f}$ and $\overline{f}$ are Lipschitz continuous on $\mathcal{S}$.\\
Conversely, let the functions $\underline{f}$ and $\overline{f}$ be Lipschitz continuous on $\mathcal{S}$. Thus, there exist $K_1,~K_2>0$ such that for all $x,~y \in \mathcal{S}$,
\begin{eqnarray*}
				&& \left \lvert \underline{f}(x) - \underline{f}(y) \right\rvert \leq K_1 \lVert  x-y \rVert~\text{and}~\left\lvert \overline{f}(x) - \overline{f}(y) \right\rvert \leq K_2 \lVert  x-y \rVert\\
		&\implies& \max \left\{\left \lvert \underline{f}(x) - \underline{f}(y) \right\rvert,~\left\lvert \overline{f}(x) - \overline{f}(y) \right\rvert\right\} \leq \bar K \lVert x-y \rVert,~\text{where}~\bar K=\max\{K_1,~K_2\}\\
		&\implies& {\lVert \textbf{F}(x) \ominus_{gH} \textbf{F}(y) \rVert }_{I(\mathbb{R})} \le \bar K {\lVert x-y \rVert}.
		\end{eqnarray*}
		Hence, $\textbf{F}$ is $gH$-Lipschitz continuous IVF on $\mathcal{S}$.
\item        Let $\textbf{F}$ be $gH$-Lipschitz continuous on $\mathcal{S}$. Then, there exists an $K>0$ such that for all $x,~y \in \mathcal{S}$, we have
        \[ {\lVert \textbf{F}(y) \ominus_{gH} \textbf{F}(x) \rVert }_{I(\mathbb{R})} \le K {\lVert y-x \rVert}.\]
        For $h=y-x\in \mathcal{S}$,
        \begin{eqnarray*}
        &&{\lVert \textbf{F}(x+h) \ominus_{gH} \textbf{F}(x) \rVert }_{I(\mathbb{R})} \le K {\lVert h \rVert}\\
        &\implies& \lim_{\lVert h \rVert\rightarrow 0}{\lVert \textbf{F}(x+h) \ominus_{gH} \textbf{F}(x) \rVert }_{I(\mathbb{R})}=0\\
        &\implies&\lim_{\lVert h \rVert\rightarrow 0}\left( \textbf{F}(x+h) \ominus_{gH} \textbf{F}(x)\right)=\textbf{0}.
        \end{eqnarray*}
        Hence, $\textbf{F}$ is $gH$-continuous at $x\in \mathcal{S}$.
       \end{enumerate} 
       \end{proof}

	\section{Proof of Lemma \ref{00}} \label{appendix_sup}
	\begin{proof}
	\begin{enumerate}[(i)]
		\item 
		Since \[\limsup\limits_{x\to \bar{x}}\left(\underline{f}(x)+\underline{g}(x)\right) \leq \limsup\limits_{x\to \bar{x}} \underline{f}(x) + \limsup\limits_{x\to \bar{x}} \underline{g}(x)~\text{and}\] \[\limsup\limits_{x\to \bar{x}}\left(\overline{f}(x)+\overline{g}(x)\right) \leq \limsup\limits_{x\to \bar{x}}\overline{f}(x)+ \limsup\limits_{x\to \bar{x}} \overline{g}(x),\]
		then
			\begingroup
		\allowdisplaybreaks
		\begin{eqnarray*}
			&&\left[\limsup\limits_{x\to \bar{x}}(\underline{f}(x)+\underline{g}(x)), ~ \limsup\limits_{x\to \bar{x}}\left(\overline{f}(x)+\overline{g}(x)\right)\right]\\
			&\preceq& \left[\limsup\limits_{x\to \bar{x}} \underline{f}(x), \limsup\limits_{x\to \bar{x}} \overline{f}(x) \right]\oplus \left[\limsup\limits_{x\to \bar{x}} \underline{g}(x), \limsup\limits_{x\to \bar{x}} \overline{g}(x) \right],  
			\end{eqnarray*}
which implies 
			$\limsup\limits_{x\to \bar{x}}\left(\textbf{F}(x)\oplus \textbf{G}(x)\right)\preceq \limsup\limits_{x\to \bar{x}} \textbf{F}(x) \oplus \limsup\limits_{x\to \bar{x}} \textbf{G}(x).$

		\item  Since $\underline{f}$ and $\overline{f}$ are real-valued functions, for any $\lambda \geq 0$, we have 
		\begin{equation}\label{ffff}
		\limsup\limits_{x\to \bar{x}}\left(\lambda \underline{f}(x)\right) = \lambda \limsup\limits_{x\to \bar{x}} \underline{f}(x)~\text{and}~ \limsup\limits_{x\to \bar{x}}\left(\lambda \overline{f}(x)\right) = \lambda \limsup\limits_{x\to \bar{x}} \overline{f}(x).
		\end{equation}
		Hence, for any $\lambda \geq 0$, 
		\begin{eqnarray*}
		\limsup\limits_{x\to \bar{x}}\left(\lambda \odot \textbf{F}(x)\right) &=& \left[\limsup\limits_{x\to \bar{x}}\left(\lambda \underline{f}(x)\right), ~ \limsup\limits_{x\to \bar{x}}\left( \lambda \overline{f}(x)\right)\right]\\
		&=&	\lambda \odot \limsup\limits_{x\to \bar{x}}  \textbf{F}(x)~\text{by  (\ref{ffff})}.
		\end{eqnarray*}
		
        \item  Let $f$ be a real-valued function. Then, $\left\lvert \limsup\limits_{x\to \bar{x}} f(x) \right\rvert \leq \limsup\limits_{x\to \bar{x}} \left\lvert f(x) \right\rvert$. By the definition of norm on $I(\mathbb{R})$,
		\begin{eqnarray*}
			\left\lVert \limsup\limits_{x\to \bar{x}} \textbf{F}(x) \right\rVert_{I(\mathbb{R})} &=& \max \left\{\left\lvert \limsup\limits_{x\to \bar{x}} \underline{f}(x) \right\rvert ,~\left\lvert \limsup\limits_{x\to \bar{x}} \overline{f}(x) \right\rvert\right\} \\
			&\leq&\limsup\limits_{x\to \bar{x}}~ \lVert  \textbf{F}(x) \rVert_{I(\mathbb{R})}.
		\end{eqnarray*}
		\end{enumerate}
		\endgroup
		\end{proof}

\section{Proof of Lemma \ref{clarkeimply}} \label{appendix_imply}
	\begin{proof}
	Since $\underline{f}$ and $\overline{f}$ are upper Clarke differentiable at $\bar{x}$. Therefore, both of the following limits 
	\[\limsup_{\substack{%
					x \to \bar{x}\\
					\lambda \to 0+}}\frac{1}{\lambda}l_1(\lambda)~\text{and}~\limsup_{\substack{%
					x \to \bar{x}\\
					\lambda \to 0+}}\frac{1}{\lambda}l_2(\lambda)
	\]
	exist, where $l_1(\lambda)=\underline{f}(x+\lambda h)-\underline{f}(x)$ and $l_2(\lambda)=\overline{f}(x+\lambda h)-\overline{f}(x)$. Thus,
	\begingroup
	    \allowdisplaybreaks
		\begin{eqnarray*}
			&&\limsup_{\substack{%
					x \to \bar{x}\\
					\lambda \to 0+}}\frac{1}{\lambda}\left(l_1(\lambda)+l_2(\lambda)\right)~\text{and}~\limsup_{\substack{%
					x \to \bar{x}\\
					\lambda \to 0+}}\frac{1}{\lambda}\lvert l_1(\lambda)-l_2(\lambda)\rvert~\text{exist}\\
			&\implies&\limsup_{\substack{%
					x \to \bar{x}\\
					\lambda \to 0+}}\frac{1}{2\lambda}\Big(l_1(\lambda)+l_2(\lambda)-\lvert l_1(\lambda)-l_2(\lambda)\rvert\Big)~\text{and}\\
			&&\limsup_{\substack{%
					x \to \bar{x}\\
					\lambda \to 0+}}\frac{1}{2\lambda}\Big(l_1(\lambda)+l_2(\lambda)+\lvert l_1(\lambda)-l_2(\lambda)\rvert\Big)~\text{exist}\\
			&\implies& \limsup_{\substack{%
					x \to \bar{x}\\
					\lambda \to 0+}}\frac{1}{\lambda} \left(\min\left\{l_1(\lambda), l_2(\lambda) \right\}\right)~\text{and}~\limsup_{\substack{%
					x \to \bar{x}\\
					\lambda \to 0+}}\frac{1}{\lambda}\left(\max\left\{l_1(\lambda), l_2(\lambda) \right\}\right)~\text{exist}\\
			&\implies&\limsup_{\substack{%
					x \to \bar{x}\\
					\lambda \to 0+}}\frac{1}{\lambda}\odot\left(\textbf{F}(x+\lambda h)\ominus_{gH}\textbf{F}(x)\right)~\text{exists}.
			\end{eqnarray*}
			\endgroup
			Hence, $\textbf{F}$ is upper $gH$-Clarke differentiable IVF at $\bar{x} \in \mathcal{S}$. 
    	\end{proof}


$\\$
\noindent
\textbf{Acknowledgement}\\ \\
The first author is thankful for a research scholarship awarded by the University Grants Commission, Government of India. \\


\begin{thebibliography}{99}
\bibitem{Ansari2013} Ansari, Q. H., Lalitha, C. S., and Mehta, M. (2013). \emph{Generalized Convexity, Nonsmooth Variational Inequalities, and Nonsmooth Optimization}, CRC Press, First Edition.


\bibitem{Bede2005} Bede, B. and Gal, S. G. (2005). Generalizations of the differentiability of fuzzy-number-valued functions with applications to fuzzy differential equations, \emph{Fuzzy Sets and Systems}, 151, 581--599.

		
		

\bibitem{Bhurjee2016} Bhurjee, A. K. and Padhan, S. K. (2016). Optimality conditions and duality results for non-differentiable interval optimization problems, \emph{Journal of Applied Mathematics and Computing}, 50(1--2), 59--71.
		
		
		
		
		\bibitem{cambini2008generalized} 		Cambini, A. and  Martein, L. (2008). \emph{Generalized Convexity and Optimization: Theory and Applications}, Vol. 616, Springer Science \& Business Media, First Edition. 
		
		
		\bibitem{Chalco2013kkt} Chalco-Cano, Y., Lodwick, W. A., and Rufian-Lizana, A. (2013). Optimality conditions of type KKT for optimization problem with interval-valued objective function via generalized derivative, \emph{Fuzzy Optimization and Decision Making}, 12, 305--322.
		
		\bibitem{Chalco2013-2} Chalco-Cano, Y., Rufian-Lizana, A., Rom\'{a}n-Flores H., and Jim\'{e}nez-Gamero, M. D. (2013). Calculus for interval-valued functions using generalized Hukuhara derivative and applications, \emph{Fuzzy Sets and Systems}, 219, 49--67.
		
		
		
		
		\bibitem{clarke1990optimization}
		Clarke, F. H. (1990). \emph{Optimization and Nonsmooth Analysis}, Vol. 5, Siam, First Edition.
		
			\bibitem{Costa2015} Costa, T. M., Chalco-Cano, Y., Lodwick, W. A., and Silva, G. N. (2015). Generalized interval vector spaces and interval optimization, \emph{Information Sciences}, 311, 74--85.
		
		\bibitem{demyanov2002rise} Demyanov, V. F. (2002). The rise of nonsmooth analysis: its main tools, \emph{Cybernetics and Systems Analysis}, 38(4), 527--547.
		
		
		\bibitem{dutta2005generalized} Dutta, J. (2005). Generalized derivatives and nonsmooth optimization, a finite dimensional tour, \emph{Top}, 13(2), 185--279.
		
		
		\bibitem{Ghosh2016newton} Ghosh, D. (2017). Newton method to obtain efficient solutions of
		the optimization problems with interval-valued objective functions, \emph{Journal of Applied Mathematics and Computing}, 53, 709--731.
		
		\bibitem{Ghosh2017spc} Ghosh, D., Ghosh, D., Bhuiya, S. K., and Patra, L. K. (2018). A
		saddle point characterization of efficient solutions for interval optimization
		problems, \emph{Journal of Applied Mathematics and Computing},  58(1--2), 193--217.
		
		\bibitem{Ghosh2017quasinewton} Ghosh, D. (2017). A quasi-newton method with rank-two update to solve interval optimization problems, \emph{International Journal of Applied and Computational Mathematics}, 3(3), 1719--1738.
		
		\bibitem{Ghosh2019extended} Ghosh, D., Singh, A.,  Shukla, K. K., and Manchanda, K. (2019). Extended Karush-Kuhn-Tucker condition for constrained interval optimization problems and its application in support vector machines, \emph{Information Sciences}, 504, 276--292.
		
		
		\bibitem{ghosh2019introduction}
		Ghosh, D., and Chakraborty, D., (2019). An Introduction to Analytical Fuzzy Plane Geometry, Studies in Fuzziness and Soft Computing, Volume No. 381, Springer. 
		
		
		\bibitem{Ghosh2019derivative} Ghosh, D.,  Chauhan, R. S., Mesiar, R., and Debnath, A. K. (2020).
		Generalized Hukuhara G\^{a}teaux and Fr\'{e}chet derivatives of
		interval-valued functions and their application in
		optimization with interval-valued functions, \emph{Information Sciences}, 510, 317--340.
		
		
		
        \bibitem{ghosh2020ordering} Ghosh, D., Debnath, A. K., and Pedrycz, W. (2020). 
    A variable and a fixed ordering of intervals and their application in optimization with interval-valued functions, \emph{International Journal of Approximate Reasoning}, 121, 187--205.


       \bibitem{Hiriart2012} Hiriart-Urruty, J. B. and Lemaréchal, C. (2012).  \emph{Fundamentals of Convex Analysis}, Springer Science \& Business Media, First Edition.
		
		\bibitem{Hukuhara1967} Hukuhara, M. (1967). Int\'{e}gration des applications measurables dont la valeur est un compact convexe, \emph{Funkcialaj Ekvacioj}, 10,
		205--223.
		
		\bibitem{Ishibuchi1990} Ishibuchi, H. and Tanaka, H. (1990). Multiobjective programming
		in optimization of the interval objective
		function, \emph{European Journal of Operational Research}, 48(2), 219--225.
		
		\bibitem{Jahn2007} Jahn, J. (2007). \emph{Introduction to the Theory of Nonlinear Optimization}, Springer Science and Business Media, Third edition.
		
		\bibitem{Jayswal2011} Jayswal, A., Stancu-Minasian, I., and Ahmad, I. (2011). On sufficiency and duality for a class of interval-valued programming problems, \emph{Applied Mathematics and Computation}, 218(8), 4119--4127.
		
		\bibitem{Jayswal2015} Jayswal, A., Stancu-Minasian, I., Banerjee, J., and Stancu, A. M. (2015). Sufficiency and duality for optimization problems involving interval-valued invex functions in parametric form, \emph{Operational Research}, 15(1), 137--161.
		
	
		
		
		
		
        \bibitem{Landowski2015} Landowski, M. (2015). Differences between Moore and RDM interval arithmetic, \emph{Intelligent Systems'} 2014, 331--340.
		
		\bibitem{Liu2007} Liu, S. T. and Wang, R. T. (2007). A numerical solution method to interval quadratic programming, \emph{Applied Mathematics and Computation}, 189(2), 1274--1281.
		
		
		\bibitem{Lupulescu2014} Lupulescu, V. (2015). Fractional calculus for interval-valued functions, \emph{Fuzzy Sets and Systems}, 265, 63--85.
		
		\bibitem{Markov1979} Markov, S. (1979). Calculus for interval functions of a real variable, \emph{Computing}, 22(4), 325--337.

 \bibitem{Moore1966} Moore, R. E. (1966). \emph{Interval Analysis}, Prentice-Hall, Englewood Cliffs, New Jersey.
		
		\bibitem{Moore1987} Moore, R. E. (1987). \emph{Method and Applications of Interval Analysis}, Society for Industrial and Applied Mathematics, First Edition.
		
		
		
			
		\bibitem{Oliveira2007} Oliveira, C. and Antunes, C. H. (2007). Multiple objective linear programming models with interval coefficients--an illustrated overview, \emph{European Journal of Operational Research}, 181(3), 1434--1463.
		
		\bibitem{Osuna2017} Osuna-Gómez, R., Hernández-Jiménez, B., Chalco-Cano, Y., and Ruiz-Garzón, G. (2017). New efficiency conditions for multiobjective interval-valued programming problems, \emph{Information Sciences}, 420, 235--248.
	
	\bibitem{Ramik2001} Ram{\' i}k, J., Vlach, M. (2001). \emph{Generalized Concavity in Optimization and Decision Making}, Vol. 305,  
		Kluwer Publ. Comp., Boston-Dordrecht-London.
		
		\bibitem{schirotzek2007nosmooth}
		Schirotzek, W. (2007). \emph{Nonsmooth Analysis}, Springer Science \& Business Media, First Edition.
		
		\bibitem{Sengupta2001} Sengupta, A., Pal, T. K., and Chakraborty, D. (2001). Interpretation of inequality constraints involving interval coefficients and a solution to interval linear programming, \emph{Fuzzy Sets and Systems}, 119(1), 129--138.
		
		
		
		
\bibitem{Stefanini2008} Stefanini, L. (2008). A generalization of Hukuhara difference, In \emph{Soft Methods for Handling Variability and Imprecision},  Advances in Soft Computing, pp.  203--210. 

\bibitem{Stefanini2009} Stefanini, L. and Bede, B. (2009). Generalized Hukuhara differentiability of interval-valued functions and interval differential equations, \emph{Nonlinear Analysis}, 71, 1311--1328.

\bibitem{Stefanini2019} Stefanini, L. and Arana-Jim{\'e}nez, M. (2019). Karush–Kuhn–Tucker conditions for interval and fuzzy optimization in several variables under total and directional generalized differentiability, \emph{Fuzzy Sets and Systems}, 362, 1--34.
		
		
		
		\bibitem{Wang2015} Wang, H. and Zhang, R. (2015). Optimality conditions and duality for arcwise connected interval optimization problems, \emph{Opsearch}, 52(4), 870--883.
		
		\bibitem{Wu2007} Wu, H. C. (2007). The Karush-Kuhn-Tucker optimality conditions in an optimization problem with interval-valued objective function, \emph{European Journal of Operational Research},  176, 46--59.
		
		\bibitem{Wu2008} Wu, H. C. (2008). On interval-valued non-linear programming problems, \emph{Journal of Mathematical Analysis and Applications}, 338(1), 299--316.
		
		
		\bibitem{Wu2010} Wu, H. C. (2010). Duality theory for optimization problems with interval-valued objective functions, \emph{Journal of Optimization Theory and Applications}, 144(3), 615--628.
		
		
		
		
		
		\bibitem{zhou2009} Zhou, H. C. and Wang, Y. J. (2009). Optimality condition and mixed duality for interval-valued optimization, \emph{Fuzzy Information and Engineering}, 2, 1315--1323.
	
	\end{thebibliography}
\end{document}